\documentclass[11pt]{article}
\usepackage[margin=1in]{geometry}
\usepackage{graphicx} 
\usepackage{amsmath, amsfonts,amssymb}
\usepackage{amsthm}
\usepackage{amstext}
\usepackage{amsopn}
\usepackage{appendix}
\usepackage{caption}    
\usepackage{subcaption} 
\usepackage{tikz}
\usepackage{booktabs,multirow}

\usepackage{multirow}
\usepackage{array}
\usepackage{hyphenat}  
\usepackage[shortlabels]{enumitem}

\usepackage{graphicx}
\usepackage{bm}
\newtheorem{theorem}{Theorem}[section]
\newtheorem{lemma}[theorem]{Lemma}
\newtheorem{proposition}[theorem]{Proposition}
\newtheorem{corollary}[theorem]{Corollary}
\newtheorem{remark}[theorem]{Remark}

\def\MM{\mathcal{M}}
\def\YY{\mathcal{Y}}

\theoremstyle{definition}
\newtheorem{definition}[theorem]{Definition}
\newtheorem{example}[theorem]{Example}

\begin{document}

\numberwithin{equation}{section}
\title{{\vspace{-80pt}} Optimal Harvesting  in Stream Networks: \\ Maximizing Biomass and Yield
}
\date{}
\author{~}

\maketitle

\vspace{-0.2cm}

\begin{center}{\large
Tung D. Nguyen\\[1mm]
Department of Mathematics, University of California Los Angeles\\
Los Angeles, CA 90024, USA\\[3mm]

Zhisheng Shuai\\[1mm]
Department of Mathematics, University of Central Florida\\
Orlando, FL 32816, USA\\[3mm]

Tingting Tang\\[1mm]
Department of Mathematics and Statistics, San Diego State University\\
San Diego, CA 92182, USA\\[3mm]

Amy Veprauskas\\[1mm]
Department of Mathematics, University of Louisiana at Lafayette\\
Lafayette, LA 70501, USA\\[3mm]

Yixiang Wu\\[1mm]
Department of Mathematical Sciences, Middle Tennessee State University\\
Murfreesboro, TN 37132, USA\\[3mm]

Ying Zhou\\[1mm]
Department of Mathematical Sciences, Lafayette College\\
Easton, PA 18042, USA
}
\end{center}

\smallskip

\begin{abstract}
    In this study, we develop a metapopulation model framework to identify optimal harvesting strategies for a population in a stream network. We consider two distinct optimization objectives: maximization of total biomass and maximization of total yield, under the constraint of a fixed total harvesting effort. We examine in detail the special case of a two-patch network and fully characterize the optimal strategies for each objective. We show that when the population growth rate exceeds a critical threshold, a single harvesting strategy can simultaneously maximize both objectives. For general $n$-patch networks with homogeneous growth rates across patches, we focus on the regime of large growth rates and demonstrate that the optimal harvesting strategy selects patches according to their intraspecific competition rates and an \textit{effective net flow} metric determined by network connectivity parameters. \\
    
\end{abstract}

\newpage
\section{Introduction}

Harvesting strategies are an important part of managing renewable resources such as fish stocks. While harvesting provides essential benefits including food and economic livelihood, poorly managed harvesting can cause overexploitation and resource depletion \cite{hilborn2011overfishing, agriculture2018state}. It is therefore critical to identify sustainable harvesting strategies that strike a balance between harvesting and species preservation. Mathematical models offer a useful framework for addressing this challenge. Concepts such as maximum sustainable yield in the classic bioeconomic models \cite{clark2005mathematical, gordon1954, schaefer1954} have played a foundational role in learning how to optimize yield without depleting the resource. The characterization and analysis of optimal harvesting strategies have received considerable attention in the mathematical and ecological study of renewable resource management, including investigations in homogeneous environments \cite{brauer1975constant,brauer1976constant,brauer1979harvesting, dunkel1970maximum, dunkel2006maximum,swan1975some, wan1983optimal}, periodic environments \cite{dong2007optimal,fan1998optimal, feng2023periodic, liu2022optimal, xiao2006optimal, xu2005harvesting}, models with impulsive harvesting \cite{angelova2000optimization,berezansky2004impulsive,braverman2008continuous,cid2014harvest,shuai2007optimization,tang2006optimal,zhang2003optimal,zhang2006optimal}, and spatially heterogeneous settings formulated through partial differential equations \cite{bai2005gilpin, braverman2009optimal, cui2017effect, ding2009optimal,herrera2010spatial, kelly2016optimal, kelly2019marine, korobenko2009logistic, korobenko2013persistence, neubert2003marine, oruganti2002diffusive} or metapopulation systems of ordinary differential equations \cite{auger2022increase,elbetch2023nonlinear,gao2022total,gonzalez2011two, nguyen2024connectivity, pilyugin2016effectiveness, reurik2025connectivity, supriatna1999harvesting, takashina2015maximum, tuck2000marine, wu2020dispersal,zhang2015effects}.

While many harvesting models assume well-mixed populations, it is beneficial to consider the effect of space and dispersal in determining optimal harvesting strategies \cite{ding2009optimal, gonzalez2011two, herrera2010spatial, kelly2016optimal, supriatna1999harvesting, takashina2015maximum, tuck2000marine}. In \cite{herrera2010spatial}, the authors showed that ignoring spatial dynamics can lead to suboptimal results. In \cite{takashina2015maximum}, the authors demonstrated that spatially explicit harvest models can yield  lower maximum sustainable yields than their non-spatial counterparts, raising concerns of overharvesting when spatial heterogeneity is neglected. Spatially explicit models are especially important when designing marine projected areas \cite{gaines2010designing, gonzalez2011two, kelly2016optimal, moeller2015economically, neubert2003marine, moeller2015economically} or considering the effect of spatial heterogeneity \cite{kelly2016optimal} on populations. A spatially explicit model also allows us to investigate how the spatial distribution of harvesting efforts, rather than the total sum of harvesting effort, affects the population or the harvesting yield.

Besides optimizing the total yield, increasing or preserving total biomass of a fish population can also be a desirable outcome in fisheries management \cite{jensen2002maximum} because the decline or collapse of a stock is always a concern for fisheries \cite{lauck1998implementing}. In some ways, harvesting and preserving may not be conflictive goals: in \cite{sanchirico2001bioeconomic}, the authors showed that both the sustainable yield and the sustainable aggregate biomass of the fish population can be increased by the creation of no-harvest marine reserves. Can we find such win-win scenarios in the context of optimization? And if we cannot optimize both the yield and the biomass, how should we direct harvesting efforts when prioritizing one over the other?

In this paper, we develop a metapopulation model for a population that resides in a stream network consisting of connected habitat patches. 
Our goals are to find optimal ways to distribute harvesting efforts in the patches so as to maximize either the total biomass or the total yield across the patches.  For a homogeneous two-patch model, we find that when resources are abundant, maximum total biomass is achieved by harvesting exclusively on the downstream patch; otherwise, harvesting exclusively in the upstream patch maximizes biomass (Theorem 3.5; also see Theorem~3.4). Harvesting exclusively in the downstream patch also maximizes yield when growth rates are high and advection is sufficiently strong relative to the total harvest (Theorem 3.11). When these parameter conditions are not met, however, optimal strategies for maximizing the yield can vary (Figure 2). For $n$-patch networks with heterogeneous intraspecific competition rates, the two optimization objectives lead to opposite strategies: maximizing biomass favors concentrating harvesting on patches with larger competition rates (Corollary 4.3), while maximizing yield favors concentrating on patches with smaller competition rates (Corollary 4.10). Within these patches, we find that the optimal distribution of harvesting efforts for both the biomass and yield objective depends on an effective net flow metric (Definition 4.4).

The rest of the paper is organized as follows. In Section \ref{section model}, the model formulation for a stream network with harvesting is presented and and the biomass and yield optimization problems are defined. Section \ref{section constrained harvest} considers a special case where the network consists of only two patches. For this scenario we examine optimal harvesting strategies for both heterogeneous and homogeneous patches. Section \ref{sec n patch} extends the results to an $n$-patch network. Finally, we discuss the findings and directions of future work in Section \ref{section discussion}.

\section{Model formulation and optimization problems}\label{section model}
The classic logistic model takes the form  \begin{equation}\label{eq-logistic}
    u'=ru\Big(1-\frac{u}{K}\Big),
\end{equation}
where $u=u(t)$ denotes the population size (or density) at time $t$, $r$ is the intrinsic growth rate, and $K>0$ is the environmental carrying capacity. This formulation presumes a nonnegative intrinsic growth rate ($r\ge 0$) and thus becomes invalid when $r<0$ as it may produce negative population size. 

To allow for both positive and negative intrinsic growth, we adopt the following logistic-type modification:
\begin{equation}
    u'=u(r-cu),
\label{eq:model}\end{equation}
where $r\in \mathbb{R}$ may be either positive or negative, and $c>0$ measures the strength of intraspecies competition. Model \eqref{eq:model} may admit two equilibria: the trivial equilibrium $u=0$, and, when $r>0$, the positive equilibrium $u^*=\frac{r}{c}$. The stability properties are straightforward: if $r\le 0$, the trivial equilibrium $u=0$ is globally asymptotically stable in $[0, +\infty)$; if $r> 0$, then the trivial equilibrium becomes unstable and the positive equilibrium $u^*$ is globally asymptotically stable in $(0,+\infty)$. Biologically, when $r>0$, the ratio $r/c$ can be interpreted as the environmental carrying capacity, analogous to $K$ in model \eqref{eq-logistic}.

Building on the modified logistic model \eqref{eq:model}, we introduce the following metapopulation framework with explicit population control in a landscape of $n$ patches:
\begin{equation}\label{n patch system}
u_i' = u_i (r_i - c_i u_i) - h_i u_i + \sum_{j=1}^n (a_{ij}u_j - a_{ji} u_i),  \quad i=1,2,\ldots, n,
\end{equation}
where $u_i=u_i(t)$ denotes the population density in patch $i$ at time $t$, $r_i$ is the intrinsic growth rate, $c_i>0$ reflects the intensity of intraspecies competition, $h_i$ represents the harvesting or stocking effort applied to patch $i$, and $a_{ij} \ge 0$ denotes the dispersal rate from patch $j$ to patch $i$. Assume that the $n\times n$ movement matrix $A=[a_{ij}]$ is irreducible so that an individual can move directly or indirectly from any two ordered patches. As with the base model \eqref{eq:model}, the dynamics of the metapopulation model \eqref{n patch system} are characterized by the stability of the trivial equilibrium $E_0=(0,0,\ldots, 0)$ and the positive equilibrium $E^*=(u_1^*, u_2^*, \ldots, u_n^*)$: if $s(J) < 0$, then the trivial equilibrium $E_0$ is globally asymptotically stable; if $s(J)>0$, then the positive equilibrium $E^*$ is globally asymptotically stable.

Despite considerable progress in recent decades, major challenges persist in {characterizing optimal harvesting strategies, }largely due to the intrinsic complexity of the optimization problems and the heterogeneity inherent of the modeling framework. We illustrate these difficulties here for the case of unconstrained harvesting. When $n=1$, the maximum sustainable yield (MSY) is $$\mathcal{Y}=\frac{r^2}{4c} \; ,$$ 
which is attained at the harvesting rate $h=\frac{r}{2}$, yielding the equilibrium population $u^*=\frac{r}{2c}$. If $h>r$, the trivial equilibrium $u=0$ is globally asymptotically stable and the population goes to extinction. 

For general $n$, the positive equilibrium 
$u^*=(u_1^*, \ldots, u_n^*)$ satisfies
\begin{equation}\label{eq-eq}
    u_i^*[(r_i-h_i) - c_iu_i^*] + \sum_j (a_{ij} u_j^* - a_{ji} u_i^*) = 0, \qquad i=1,\ldots,n.
\end{equation}
Summing over $i$ and using the identity
\begin{equation*}
    \sum_{i,j} (a_{ij} u_j^* - a_{ji} u_i^*) = 0,
\end{equation*}
we obtain
\begin{equation*}
    \sum_i u_i^*[(r_i-h_i)-c_iu_i^*] = 0.
\end{equation*}
Rearranging yields an explicit expression for the total yield:
\begin{equation}
    \mathcal{Y} = \sum_i h_i u_i^* = \sum_i r_i u_i^* - \sum_i c_i (u_i^*)^2. 
\end{equation}
The right-hand side is maximized when $u_i^* = \frac{r_i}{2c_i}, i=1,\ldots,n$.  Substituting these values into the equilibrium equations \eqref{eq-eq} shows that the harvesting rates achieving maximal yield satisfy
\begin{equation}\label{eq-hs}
    h_i = \dfrac{r_i}{2} + \sum_j \Big(a_{ij}\frac{r_j/c_j}{r_i/c_i} - a_{ji}\Big), \qquad i=1,\ldots,n.
\end{equation}
These optimal harvesting strategies, which may be positive (harvesting) or negative (stocking), reflect a combination of local patch properties ($r_i$ and $c_i$) and the overall network connectivity among patches, encoded in the dispersal rates $a_{ij}$. The resulting  metapopulation-level maximum sustainable yield (MSY) is given by
\begin{equation}
    \mathcal{Y} = \sum_i \frac{r_i^2}{4c_i} \; .
\end{equation}
 In the special case when $r_i=r$ for all $i$, the harvesting effort distribution depends solely on the sum $\sum_j \Big(a_{ij}\frac{c_i}{c_j} - a_{ji}\Big)$, which can be regarded as a ``weighted" net flow into a node $i$. We show later in Section \ref{sec n patch} that a similar quantity (Definition \ref{def:netflow}) also plays a vital role in maximizing the total biomass and the yield in the constrained harvesting problem.

Under the additional constrain $h_i\ge 0$ (i.e., stocking is not allowed), the MSY problem becomes considerably more complex.  In the remaining of the paper, we aim to provide a systematic classification of which habitat patches to control, and to what extent, in order to achieve optimal management objectives in this case. In particular, we consider the following two optimization problems

\begin{enumerate}
\item {\bf Maximizing total biomass:} 
$$\max_{h_i \ge 0} \; \liminf_{t\to\infty} \sum_{i=1}^n u_i(t),$$

\item {\bf Maximizing sustainable harvesting yield:}  
$$ \max_{h_i\geq 0} \liminf_{t\to\infty} \sum_{i=1}^n h_i u_i(t),$$
\end{enumerate}
subject to the harvesting constraint $\sum_{i=1}^n h_i = H$.

 To focus the analysis, we consider two cases, when  heterogeneity arises solely from the network of connections between otherwise homogeneous patches and when the patches are also heterogeneous. These cases allows us to isolate and rigorously examine the role of connectivity and patch heterogeneity in shaping both intrinsic dynamics and the effectiveness of control strategies. Our results highlight not only the critical influence of structural heterogeneity on population outcomes, but also its implications for the design of optimal interventions in fragmented or interconnected ecological systems.

\section{Constrained harvesting in a two-patch system with biased movement}\label{section constrained harvest}

In this section we consider a constrained harvesting problem in a two-patch system with biased movement. We make the following assumptions:
\begin{enumerate}
    
    \item [(H1)] We call patch 1 the \textit{upstream patch} and patch 2 the \textit{downstream patch} and assume that the movement rate from the upstream patch to the downstream patch is $d+q$, and the movement rate from the downstream patch to the upstream patch is $d$ where $d, q>0$.
    \item [(H2)] The total harvesting effort is fixed and there is no stocking, i.e. $h_1+h_2=H>0$ and $h_1,h_2\geq 0$.
\end{enumerate}
To specify the distribution of harvesting effort, we introduce a new parameter $\theta\in[0,1]$ such that:
\begin{equation}\label{eqn:theta}
h_1=\theta H \quad \text{and} \quad h_2=(1-\theta)H.
\end{equation}
Combining assumptions (H1), (H2) and equation \eqref{eqn:theta}, the ODE system we consider is
\begin{align}\label{eqn:ODEs_2patch}
    &u_1'=u_1(r_1-\theta H -c_1u_1) -(d+q)u_1+du_2\nonumber\\
    &u_2'=u_2(r_2-(1-\theta)H -c_2u_2)+(d+q)u_1-du_2.
\end{align}
First, we present a sufficient condition for the species to persist in both patches.
\begin{lemma}\label{lem:persist}
    Suppose the intrinsic growth rates $r_1$ and $r_2$ satisfy
    \[
    \frac{d}{2d+q}r_1+\frac{d+q}{2d+q}r_2>H\frac{d+q}{2d+q} =: r_{\text{crit}}.
    \]
    Then there exists a unique  positive equilibrium $\bm u^*$ which is asymptotically stable.
\end{lemma}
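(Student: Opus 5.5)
The plan is to reduce the lemma to the spectral criterion stated in Section~\ref{section model}. For the metapopulation model \eqref{n patch system}, the positive equilibrium $E^*$ exists and is globally asymptotically stable whenever $s(J)>0$. Here $J$ is the Jacobian at $E_0$. For system \eqref{eqn:ODEs_2patch}, write $m_1=r_1-\theta H$ and $m_2=r_2-(1-\theta)H$. Then
\[
J=\begin{pmatrix} m_1-(d+q) & d\\ d+q & m_2-d\end{pmatrix}.
\]
This matrix is irreducible and Metzler, since $d,d+q>0$. So it suffices to show that $s(J)>0$ for every $\theta\in[0,1]$. Uniqueness and positivity of $\bm u^*$, together with its asymptotic stability, are then supplied by that general fact. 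If I wanted to be self-contained, I would invoke the standard theory of cooperative systems with strictly subhomogeneous nonlinearities.

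The first step is to remove the dependence on $\theta$. The movement matrix has right null vector $(d,d+q)$, which explains the weights in the hypothesis. Compute
\[
d\,m_1+(d+q)\,m_2=d r_1+(d+q)r_2-H\bigl(d\theta+(d+q)(1-\theta)\bigr).
\]
Since $d<d+q$, the last bracket is at most $d+q$, with equality at $\theta=0$. Hence the hypothesis gives $d\,m_1+(d+q)\,m_2>0$ for all $\theta\in[0,1]$.

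The second step, which is the main point, is to show directly for $2\times 2$ matrices that $d\,m_1+(d+q)\,m_2>0$ forces $s(J)>0$. A $2\times2$ Metzler matrix with positive off-diagonal entries has real eigenvalues. Therefore $s(J)\le 0$ would require both $\operatorname{tr}J\le 0$ and $\det J\ge 0$. Expanding,
\[
\det J=m_1m_2-d\,m_1-(d+q)\,m_2=(m_1-(d+q))(m_2-d)-d(d+q).
\]
Suppose $\det J\ge 0$. Then $(m_1-(d+q))(m_2-d)\ge d(d+q)>0$, so the two factors are nonzero and have the same sign.

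The two sign cases are handled as follows.
\begin{itemize}
\item If both factors are positive, then both diagonal entries of $J$ are positive, so $\operatorname{tr}J>0$.
\item If both factors are negative, then $m_1<d+q$ and $m_2<d$. From the first form of the determinant, $m_1m_2\ge d\,m_1+(d+q)\,m_2>0$, so $m_1,m_2$ have the same sign. They cannot both be negative, because their weighted sum is positive. Hence $0<m_1<d+q$ and $0<m_2<d$. But then $0<(d+q)-m_1<d+q$ and $0<d-m_2<d$, so the product $((d+q)-m_1)(d-m_2)$ is strictly less than $d(d+q)$. This contradicts $\det J\ge 0$.
\end{itemize}
In every case one of the conditions $\operatorname{tr}J\le 0$, $\det J\ge 0$ fails, so $s(J)>0$. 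This holds uniformly in $\theta$, which completes the argument.
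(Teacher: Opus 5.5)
Your proof is correct, and its outline matches the paper's. Both arguments reduce the lemma to showing $s(J)>0$. Both then use the null vector $(d,d+q)$ of the movement matrix to obtain a bound that is uniform in $\theta\in[0,1]$. You also state explicitly the appeal to the general fact that $s(J)>0$ gives a unique, globally stable positive equilibrium, which the paper leaves implicit.

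The genuine difference is in the key step. The paper imports from the literature the quantitative inequality $s(J)\ge \phi_1 m_1+\phi_2 m_2$, with $\phi=(d,d+q)/(2d+q)$, and then minimizes the right-hand side over $\theta$. You instead first show $d\,m_1+(d+q)\,m_2>0$ for every $\theta$. You then prove by hand that a positive weighted sum forces $s(J)>0$, using the fact that a $2\times 2$ Metzler matrix with positive off-diagonal entries has real eigenvalues, so $s(J)\le 0$ would require both $\operatorname{tr}J\le 0$ and $\det J\ge 0$. Your case analysis on the signs of $m_1-(d+q)$ and $m_2-d$ checks out.

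Your route is elementary and self-contained, but it gives only the sign of $s(J)$ and relies on the $2\times 2$ trace--determinant structure. The cited bound is a genuine estimate. It follows, for instance, from convexity of the spectral bound in the diagonal entries, together with $s(L)=0$, left eigenvector $\mathbf{1}$ (zero column sums) and right eigenvector $\phi$. It holds for any irreducible movement matrix with zero column sums, so it would carry over unchanged to the $n$-patch model.
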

\begin{proof}
    From \cite{chen2022two}, we can obtain a lower bound for the spectral bound of the Jacobian matrix for the linearization of  system \eqref{eqn:ODEs_2patch} at the trivial equilibrium
    \[
    \rho \geq (r_1-\theta H)\phi_1 + (r_2-(1-\theta)H)\phi_2,
    \]
    where $(\phi_1,\phi_2)$ is the normalized eigenvector of $L$ corresponding to the eigenvalue $0$. It is easy to check that 
    \[
    (\phi_1,\phi_2)=\bigg(\frac{d}{2d+q},\frac{d+q}{2d+q}\bigg).
    \]
    Thus we have 
    \begin{align*}
    \rho &\geq (r_1-\theta H)\frac{d}{2d+q} + (r_2-(1-\theta)H)\frac{d+q}{2d+q} \\
    &= \frac{d}{2d+q}r_1+\frac{d+q}{2d+q}r_2 - H\bigg(\theta\frac{d}{2d+q}+(1-\theta)\frac{d+q}{2d+q}\bigg)\\
    &\geq \frac{d}{2d+q}r_1+\frac{d+q}{2d+q}r_2-r_{\text{crit}}
    \end{align*}
    since $\theta\in[0,1]$. Therefore a sufficient condition for persistence is that 
     \[
    \frac{d}{2d+q}r_1+\frac{d+q}{2d+q}r_2>H\frac{d+q}{2d+q} =: r_{\text{crit}}.
    \]
\end{proof}

Assuming that the species persists, let $\bm u^*(\theta) = (u_1^*(\theta),u_2^*(\theta))$ be the unique positive equilibrium of system \eqref{eqn:ODEs_2patch}. For notational convenience, we omit the function arguments and the star and just write $(u_1,u_2)$ for the positive equilibrium. Thus $(u_1,u_2)$ is the solution of the following system
\begin{subequations}
\begin{align}
    &u_1(r_1-\theta H -c_1u_1) -(d+q)u_1+du_2=0\label{eqn:u1}\\
    &u_2(r_2-(1-\theta)H -c_2u_2)+(d+q)u_1-du_2=0\label{eqn:u2}.
\end{align}    
\end{subequations}

\subsection{Maximizing the total biomass when the patches are heterogeneous}\label{sec:biomass_het}
We first consider the optimal harvesting strategy if the goal is to maximize the (remaining) total biomass $\MM=u_1+u_2$. We start with a lemma that provides an expression for $\MM'(\theta)$.

\begin{lemma}\label{lem:M'_hetero}
We have the equality
\begin{equation}\label{eq:M'_het}
    \MM'(\theta)= -\frac{H(u_1+u_2)((d+q)u_1^2-du_2^2)+Hu_1^2u_2^2(c_2-c_1)}{\det(A)},
\end{equation}
where 
\begin{equation}\label{matrix A_het}
A=\begin{bmatrix}
    -(c_1u_1^2+du_2) & du_1\\
    (d+q)u_2 & -(c_2u_2^2+(d+q)u_1)
\end{bmatrix}.
\end{equation}
\end{lemma}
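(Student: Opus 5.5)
The plan is to differentiate the equilibrium equations \eqref{eqn:u1}--\eqref{eqn:u2} implicitly with respect to $\theta$. This gives a $2\times 2$ linear system for $(u_1'(\theta),u_2'(\theta))$ whose coefficient matrix is exactly $A$ in \eqref{matrix A_het}. We then solve it by Cramer's rule and add the two components to get $\MM'(\theta)=u_1'+u_2'$. To make the coefficient matrix come out in the stated form, we first divide \eqref{eqn:u1} by $u_1>0$ and \eqref{eqn:u2} by $u_2>0$. This is legitimate because we are at the positive equilibrium. The divided equations are
\begin{align*}
 &r_1-\theta H-c_1u_1-(d+q)+d\,\frac{u_2}{u_1}=0,\\
 &r_2-(1-\theta)H-c_2u_2-d+(d+q)\,\frac{u_1}{u_2}=0.
\end{align*}

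\textbf{Step 1: differentiability.} Before differentiating, we justify that $\theta\mapsto(u_1,u_2)$ is differentiable. We apply the implicit function theorem to the divided system. Its Jacobian in $(u_1,u_2)$, after multiplying the rows by $u_1^2$ and $u_2^2$ respectively, is $A$. So it suffices to show $\det(A)\neq0$. Expanding,
\[
\det(A)=(c_1u_1^2+du_2)(c_2u_2^2+(d+q)u_1)-d(d+q)u_1u_2 .
\]
The product contains the term $du_2\cdot(d+q)u_1$, which cancels the subtracted term exactly. Every remaining term is positive, so $\det(A)>0$. Combined with the uniqueness of the positive equilibrium from Lemma~\ref{lem:persist}, this gives a smooth branch $\bm u^*(\theta)$.

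\textbf{Step 2: the linear system.} Differentiating the first divided equation in $\theta$ gives $-H-c_1u_1'+d(u_2'u_1-u_2u_1')/u_1^2=0$. Multiplying by $u_1^2$ yields
\[
-(c_1u_1^2+du_2)u_1'+du_1u_2'=Hu_1^2 .
\]
Similarly, differentiating the second divided equation and multiplying by $u_2^2$ yields
\[
(d+q)u_2u_1'-(c_2u_2^2+(d+q)u_1)u_2'=-Hu_2^2 .
\]
Together these read $A\,(u_1',u_2')^T=(Hu_1^2,\,-Hu_2^2)^T$.

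\textbf{Step 3: Cramer's rule and summation.} Cramer's rule gives
\begin{align*}
\det(A)\,u_1' &= H\big(-c_2u_1^2u_2^2-(d+q)u_1^3+du_1u_2^2\big),\\
\det(A)\,u_2' &= H\big(c_1u_1^2u_2^2+du_2^3-(d+q)u_1^2u_2\big).
\end{align*}
Adding the two expressions, the terms group as $-(d+q)u_1^2(u_1+u_2)+du_2^2(u_1+u_2)+(c_1-c_2)u_1^2u_2^2$. Multiplying by $H$ and dividing by $\det(A)$ gives exactly \eqref{eq:M'_het}.

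The computation itself is routine. The only real point is Step 1, namely establishing $\det(A)\neq0$ (so that the implicit function theorem applies and the quotient in \eqref{eq:M'_het} is well defined). This is settled by the cancellation of the $d(d+q)u_1u_2$ term noted above.
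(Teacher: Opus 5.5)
Your proof is correct and follows essentially the same route as the paper: dividing by $u_i$ before differentiating and then multiplying by $u_i^2$ is exactly the paper's combination $u_i\times(\text{differentiated equation})-u_i'\times(\text{equilibrium equation})$. It yields the same system $A(u_1',u_2')^T=(Hu_1^2,-Hu_2^2)^T$, which both of you then invert and sum. Your explicit expansion $\det(A)=c_1c_2u_1^2u_2^2+c_1(d+q)u_1^3+dc_2u_2^3>0$ and the implicit-function-theorem justification of differentiability fill in details the paper only asserts as ``easy to check'' or leaves implicit.
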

\begin{proof}
Differentiating equations \eqref{eqn:u1} and \eqref{eqn:u2} in terms of $\theta$ yields another pair of equations
\begin{subequations}
    \begin{align}
           &u_1'(r_1-\theta H-c_1u_1)-u_1(H+c_1u_1')-(d+q)u_1'+du_2'=0\label{eqn:u1'_het}\\
           &u_2'(r_2-(1-\theta)H-c_2u_2) - u_2(-H+c_2u_2')+(d+q)u_1'-du_2'=0.\label{eqn:u2'_het}
    \end{align}
\end{subequations}
By taking $u_1\times \eqref{eqn:u1'_het}-u_1'\times \eqref{eqn:u1}$ and $u_2\times \eqref{eqn:u2'_het}-u_2'\times \eqref{eqn:u2}$ we obtain
\begin{subequations}
    \begin{align*}
        &-u_1^2(H+c_1u_1') +du_2'u_1-du_1'u_2=0\\
        &-u_2^2(-H+c_2u_2')+(d+q)u_1'u_2-(d+q)u_2'u_1=0.
    \end{align*}
\end{subequations}
Rearranging terms results in
\begin{align*}
    A\begin{pmatrix} u_1' \\u_2'\end{pmatrix} = \begin{pmatrix} Hu_1^2\\-Hu_2^2\end{pmatrix},
\end{align*}
where the matrix $A$ is given by equation \eqref{matrix A_het}.
It is easy to check that $\det(A)>0$. Solving this system yields
\begin{align*}
    \begin{pmatrix} u_1' \\u_2'\end{pmatrix} &=\frac{-1}{\det(A)} \begin{bmatrix}
    c_2u_2^2+(d+q)u_1 & du_1\\
    (d+q)u_2 & c_1u_1^2+du_2
\end{bmatrix}\begin{pmatrix} Hu_1^2\\-Hu_2^2\end{pmatrix}\nonumber\\
&=\frac{-1}{\det(A)}\begin{pmatrix} Hu_1^2(c_2u_2^2+(d+q)u_1) - Hdu_2^2u_1\\H(d+q)u_1^2u_2-Hu_2^2(c_1u_1^2+du_2)\end{pmatrix}.\label{eqn:u1'u2'_explicit}
\end{align*}
Adding $u_1'$ and $u_2'$ and simplifying gives us equation \eqref{eq:M'_het}.
\end{proof}

From Lemma \ref{lem:M'_hetero}, there are two factors affecting the sign of $\MM'$ (and thus the strategy to maximize $\MM$).
    \begin{itemize}
        \item The first factor is the difference $(d+q)u_1^2-du_2^2$. We will show later that this difference depends on the parameters from both individual patches and the movement network.
        \item The second factor is the difference $c_2-c_1$, which only depends on the intraspecific competition rate of individual patches.
    \end{itemize}

\begin{remark}\normalfont
While determining the sign of $\MM'$ is challenging in general, there are cases when one factor dominates the other, and $\MM'(\theta)$ is guaranteed to have one sign for any $\theta\in[0,1]$.

Firstly, when $u_1$ and $u_2$ are large and of the same order of magnitude, the sign of $\MM'(\theta)$ is determined by $c_2-c_1$. Thus the strategy to maximize $\MM$ depends  on the individual patch parameters and not on the movement rates. In particular, if $c_2>c_1$, then $\MM$ is maximized when $\theta=0$ and, if $c_1>c_2$, then $\MM$ is maximized when $\theta=1$. In other words, to maximize the total biomass, we  concentrate the harvesting effort on the patch with the higher intraspecific competition rate. This observation is generalized to $n$-patch systems in Section \ref{sec n patch}.

On the other hand, if $c_1=c_2=c$, then the sign of $\MM'(\theta)$ is determined by the sign of $(d+q)u_1^2-du_2^2$. We show in the next theorem that this sign depends on a ``weighted difference" between the intrinsic growth rates $r_1$ and $r_2$.
\end{remark}

\begin{theorem}\label{theorem:M'}
 Consider system \eqref{eqn:ODEs_2patch} and assume the persistence condition in Lemma \ref{lem:persist}. Assume further that $c_1=c_2=c$.
    \begin{enumerate}
        \item[(a)] Suppose the growth rates $r_1$ and $r_2$ satisfy 
        \[\frac{r_1\sqrt{d+q}-r_2\sqrt{d}}{\sqrt{d+q}-\sqrt{d}}> 2d+q +\frac{\sqrt{d+q}}{\sqrt{d+q}-\sqrt{d}}H=:r_M.\]
        Then $\MM'(\theta)< 0$ for any $\theta\in[0,1]$. Thus $\MM(\theta)$ is maximized at $\theta=0$, i.e. when the harvesting effort is concentrated on the downstream patch.
        \item[(b)]  Suppose the growth rates $r_1$ and $r_2$ satisfy 
        \[\frac{r_1\sqrt{d+q}-r_2\sqrt{d}}{\sqrt{d+q}-\sqrt{d}}< 2d+q -\frac{\sqrt{d}}{\sqrt{d+q}-\sqrt{d}}H=:r_m.\]
        Then $\MM'(\theta)> 0$ for any $\theta\in[0,1]$. Thus $\MM(\theta)$ is maximized at $\theta=1$, i.e. when the harvesting effort is concentrated on the upstream patch.
        \item[(c)] Suppose 
        \[
        r_m\leq \frac{r_1\sqrt{d+q}-r_2\sqrt{d}}{\sqrt{d+q}-\sqrt{d}}\leq r_M
        \]
        then there exists a unique $\theta^*\in[0,1]$ such that $\MM'(\theta^*)=0$. Furthermore, $\theta^*$ is a minimum of $\MM(\theta)$ in the interval $[0,1]$ and $\MM(\theta)$ is maximized at either $\theta=0$ or $\theta=1$, i.e. when the harvesting effort is concentrated  on either patch.
        \end{enumerate}
\end{theorem}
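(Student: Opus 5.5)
The plan is to reduce everything to the sign of one scalar function. With $c_1=c_2=c$, the term $Hu_1^2u_2^2(c_2-c_1)$ in Lemma~\ref{lem:M'_hetero} vanishes. Since $\det(A)>0$ and $H(u_1+u_2)>0$, the sign of $\MM'(\theta)$ is opposite to that of $(d+q)u_1^2-du_2^2$. Writing $a=\sqrt{d+q}$ and $b=\sqrt d$ (so $a>b$), this difference factors as $(au_1-bu_2)(au_1+bu_2)$. Hence $\operatorname{sign}\MM'(\theta)=-\operatorname{sign} g(\theta)$, where $g(\theta):=au_1(\theta)-bu_2(\theta)$. The whole theorem then follows from locating the zeros of $g$ on $[0,1]$, determining the direction in which $g$ crosses zero, and determining the sign of $g$ when it has no zero.

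\emph{Step 1: zeros of $g$.} Divide \eqref{eqn:u1} by $u_1$ and \eqref{eqn:u2} by $u_2$. If $g(\theta)=0$, then $u_2/u_1=a/b$, and so $du_2/u_1=(d+q)u_1/u_2=ab$. The equilibrium equations then give $u_1$ and $u_2$ explicitly:
\[
cu_1=r_1-\theta H-(d+q)+ab,\qquad cu_2=r_2-(1-\theta)H-d+ab.
\]
Substituting these into $au_1=bu_2$ and using $a(ab-a^2)-b(ab-b^2)=-(a-b)(a^2+b^2)$, we get the linear relation
\[
\frac{ar_1-br_2}{a-b}-(2d+q)=\frac{\big((a+b)\theta-b\big)H}{a-b}.
\]
The right-hand side increases in $\theta$, running from $r_m-(2d+q)$ at $\theta=0$ to $r_M-(2d+q)$ at $\theta=1$. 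Therefore $g$ can vanish only at the single value $\theta^*$ solving this relation. That value lies in $[0,1]$ exactly when $r_m\le X\le r_M$, where $X:=(ar_1-br_2)/(a-b)$. This gives the existence and uniqueness claimed in (c).

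\emph{Step 2: direction of crossing.} I will use the explicit formulas for $u_1'$ and $u_2'$ from the proof of Lemma~\ref{lem:M'_hetero}. At a zero of $g$ we have $(d+q)u_1^2=du_2^2$, so the numerators simplify to $Hcu_1^2u_2^2$ and $-Hcu_1^2u_2^2$. This gives $u_1'=-Hcu_1^2u_2^2/\det(A)<0$ and $u_2'=Hcu_1^2u_2^2/\det(A)>0$ at $\theta^*$. Hence $g'(\theta^*)=au_1'-bu_2'<0$, so $g$ only ever crosses zero downward. In case (c) this means $g>0$ on $[0,\theta^*)$ and $g<0$ on $(\theta^*,1]$. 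So $\MM$ decreases and then increases, $\theta^*$ is the minimizer, and the maximum of $\MM$ is attained at an endpoint.

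\emph{Step 3: sign of $g$ when it has no zero.} In cases (a) and (b), $g$ has no zero on $[0,1]$, so by continuity it has constant sign; the task is to identify that sign. I will use a continuation argument in $r_1$. Fix $\theta$ and the other parameters, and regard $X$ as a continuous increasing function of $r_1$. Increasing $r_1$ preserves persistence. By Step 1, $g(\theta)$ can vanish only when $X=T(\theta)$, the threshold given by the linear relation, and by the same computation as in Step 2 its sign changes from $-$ to $+$ as $X$ increases past $T(\theta)$. For $r_1\to\infty$, \eqref{eqn:u1} gives $u_1\sim r_1/c\to\infty$, while $u_2/u_1$ stays bounded, which can be read off from \eqref{eqn:u2}. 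Hence $g>0$ for large $r_1$, and therefore $g(\theta)>0$ whenever $X>T(\theta)$ and $g(\theta)<0$ whenever $X<T(\theta)$. Case (a) is $X>r_M\ge T(\theta)$ for all $\theta\in[0,1]$, so $g>0$, $\MM'<0$, and the maximum is at $\theta=0$. Case (b) is $X<r_m\le T(\theta)$, so $g<0$, $\MM'>0$, and the maximum is at $\theta=1$.

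The main obstacle is this last step. The crossing computation of Step 2 has to be redone with respect to $r_1$ rather than $\theta$, or else avoided by checking that the zero-set analysis alone forces the sign. I also need to justify the asymptotics of $u_2/u_1$ as $r_1\to\infty$ while the positive equilibrium continues to exist.
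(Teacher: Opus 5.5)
Steps 1 and 2 are correct. The two items you flag as the obstacle in Step 3 are in fact routine. Differentiating the equilibrium equations in $r_1$ gives $A(\partial_{r_1}u_1,\partial_{r_1}u_2)^T=(-u_1^2,0)^T$, so at a zero $\partial_{r_1}(au_1-bu_2)=ac\,u_1^2u_2^2/\det(A)>0$. Also, \eqref{eqn:u2} gives $u_2=O(\sqrt{u_1})$, so your $g>0$ for large $r_1$.

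\textbf{The gap.} Step 1 proves only one direction: $g(\theta)=0\Rightarrow X=T(\theta)$. When $X=T(\theta)$, the pair $cu_1=r_1-\theta H-(d+q)+ab$, $cu_2=r_2-(1-\theta)H-d+ab$ does solve the equilibrium equations with $au_1=bu_2$. But you have not shown this pair is positive, so you cannot identify it with the positive equilibrium. Consequently your claim that the sign of $g$ ``changes from $-$ to $+$ as $X$ passes $T(\theta)$'' is unfounded. The transversality computation applies only at an actual zero, and nothing you prove rules out $g>0$ for every admissible $r_1$. So the conclusion $g<0$ whenever $X<T(\theta)$ does not follow, and that conclusion is all of case (b) plus the interval $(\theta^*,1]$ in case (c). The existence of $\theta^*$ in (c), which you assert already at the end of Step 1, fails for the same reason.

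\textbf{The paper's route.} It avoids this by working with $t=u_1/u_2$, which is a root of
$f(t,\theta)=t\,(r_2-(1-\theta)H-d+(d+q)t)-(r_1-\theta H-(d+q)+d/t)$.
At that root, $r_2-(1-\theta)H-d+(d+q)t=cu_2>0$, so $f(\cdot,\theta)$ is negative to the left of the root and positive to its right. Hence whether $t$ exceeds $b/a$ is decided by the sign of the single value $f(b/a,\theta)=\frac{a-b}{a}\,(T(\theta)-X)$. This is exactly your Step 1 computation, read as a sign test rather than a zero test. It gives the sign of $\MM'(\theta)$ pointwise in $\theta$ in all three cases, including the zero at $\theta^*$, with no transversality or continuation needed.

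\textbf{Repairing your continuation instead.} Handle the side $X<T(\theta)$ by sending $r_2\to\infty$ rather than $r_1$. Along this path $X$ decreases and the persistence condition is preserved, so by Step 1 your $g$ never vanishes. At the end, \eqref{eqn:u1} gives $u_1=O(\sqrt{u_2})$ with $u_2\to\infty$, so $g<0$. The zero at $\theta^*$ in (c) then follows by continuity: in $\theta$ when $\theta^*$ is interior, and in $r_1$ when $\theta^*$ is an endpoint.
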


\begin{proof}
    Let $t:=\frac{u_1}{u_2}$. Dividing equations \eqref{eqn:u1} and \eqref{eqn:u2} by $u_1$ and $u_2$, respectively, yields
 \begin{subequations}
     \begin{align}
         &r_1-\theta H -(d+q) + \frac{d}{t}=cu_1 \label{eqn:div_u1_het}\\
         &r_2-(1-\theta)H -d + (d+q)t = cu_2. \label{eqn:div_u2_het}
     \end{align}
\end{subequations}
By taking \eqref{eqn:div_u1_het}$\div$\eqref{eqn:div_u2_het}, we obtain
\[
\frac{r_1-\theta H -(d+q) + \frac{d}{t}}{r_2-(1-\theta)H -d + (d+q)t}=t.
\]
Thus $t$ satisfies
\[
f(t,\theta) : = t(r_2-(1-\theta)H -d + (d+q)t) - (r_1-\theta H -(d+q) + \frac{d}{t}) =0.
\]
Consider a fixed value of $\theta$. Note that $r_2-(1-\theta)H -d + (d+q)t>0$ due to \eqref{eqn:div_u2_het}, so $f(t,\theta)$ is an increasing function with respect to $t$.

From Lemma \ref{lem:M'_hetero} and the assumption that $c_1=c_2=c$, the sign of $\MM'(\theta)$ is the opposite of the sign of $(d+q)u_1^2-du_2^2=(d+q)u_2^2(t^2-\frac{d}{d+q})$. Thus we wish to examine the condition for $t>\sqrt{\frac{d}{d+q}}$ and $t<\sqrt{\frac{d}{d+q}}$. To this end, we use direct calculations and evaluate
\begin{equation}
    f\bigg(\sqrt{\frac{d}{d+q}},\theta\bigg) = \frac{\sqrt{d+q}-\sqrt{d}}{\sqrt{d+q}}\bigg(g(\theta)-\frac{r_1\sqrt{d+q}-r_2\sqrt{d}}{\sqrt{d+q}-\sqrt{d}}\bigg),
\end{equation}
where $g(\theta):=2d+q+\frac{\theta\sqrt{d+q}-(1-\theta)\sqrt{d}}{\sqrt{d+q}-\sqrt{d}}H$. It is easy to see that $g(\theta)$ is increasing in $\theta$. Furthermore, $g(0)=r_m$ and $g(1)=r_M$, where $r_m$ and $r_M$ are defined in Theorem \ref{theorem:M'}. We now consider three cases. 

\vspace{.1in}
\noindent\textbf{Case (a):} Suppose that
$\frac{r_1\sqrt{d+q}-r_2\sqrt{d}}{\sqrt{d+q}-\sqrt{d}}>r_M$.

Since $g(\theta)$ is increasing in $\theta$, we have $\frac{r_1\sqrt{d+q}-r_2\sqrt{d}}{\sqrt{d+q}-\sqrt{d}}>r_M=g(1)\geq g(\theta)$. As a result, 
$f\big(\sqrt{\frac{d}{d+q}},\theta\big) <0$, which implies $t>\sqrt{\frac{d}{d+q}}$. From Lemma \ref{lem:M'_hetero} and the assumption that $c_1=c_2=c$, we  conclude that $\MM'(\theta)<0$ and $\MM(\theta)$ is maximized at $\theta=0$.

\vspace{.1in}
\noindent\textbf{Case (b):} Suppose that
$\frac{r_1\sqrt{d+q}-r_2\sqrt{d}}{\sqrt{d+q}-\sqrt{d}}<r_m$.

Since $g(\theta)$ is increasing in $\theta$, we have $\frac{r_1\sqrt{d+q}-r_2\sqrt{d}}{\sqrt{d+q}-\sqrt{d}}<r_m=g(0)\leq g(\theta)$. As a result, 
$f\big(\sqrt{\frac{d}{d+q}},\theta\big) >0$, which implies $t<\sqrt{\frac{d}{d+q}}$. From Lemma \ref{lem:M'_hetero} and the assumption that $c_1=c_2=c$, we  conclude that $\MM'(\theta)>0$ and $\MM(\theta)$ is maximized at $\theta=1$.

\vspace{.1in}
\noindent\textbf{Case (c):} Suppose that $r_m\leq \frac{r_1\sqrt{d+q}-r_2\sqrt{d}}{\sqrt{d+q}-\sqrt{d}}\leq r_M$.
\vspace{.1in}

In this case, there exists a unique $\theta^*\in[0,1]$ such that $\frac{r_1\sqrt{d+q}-r_2\sqrt{d}}{\sqrt{d+q}-\sqrt{d}}=g(\theta^*)$. Thus $f\big(\sqrt{\frac{d}{d+q}},\theta^*\big) =0$, which  implies $t=\sqrt{\frac{d}{d+q}}$ when $\theta=\theta^*$. From Lemma \ref{lem:M'_hetero} and the assumption that $c_1=c_2=c$ we must have $\MM'(\theta^*)=0$. 

Furthermore, for any $\theta\in[0,\theta^*]$, we have $\frac{r_1\sqrt{d+q}-r_2\sqrt{d}}{\sqrt{d+q}-\sqrt{d}}>g(\theta)$ and thus $\MM'(\theta)<0$. Using an identical argument, we also have $\MM'(\theta)>0$ for any $\theta\in[\theta^*,1]$. Thus in the interval $[0,1]$, the total biomass $\MM(\theta)$ has only one local extremum $\theta^*$, which is a minimum. This also implies that $\MM(\theta)$ is maximized at either $\theta=0$ or $\theta=1$.
\end{proof}

\subsection{Maximizing the total biomass when the patches are homogeneous}\label{sec:biomass_hom}

In this subsection, we consider the case when the two patches are homogeneous and we show that it is possible to establish a biomass-maximizing strategy for any choice of parameters. More precisely, in addition to assumptions (H1) and (H2), we make the following assumption:

\begin{enumerate}
\item [(H3)] The intrinsic growth rates and intraspecies competition rates of the two patches are the same, i.e. $r_1=r_2=r$ and $c_1=c_2=c$.
\end{enumerate}
Under assumption (H3), system \eqref{eqn:ODEs_2patch} becomes
\begin{align}\label{eqn:ODEs_2patch_hom}
    &u_1'=u_1(r-\theta H -cu_1) -(d+q)u_1+du_2\nonumber\\
    &u_2'=u_2(r-(1-\theta)H -cu_2)+(d+q)u_1-du_2.
\end{align}
The main result for this system is stated in the following theorem.
\begin{theorem}\label{thm:max_M}
    Consider system \eqref{eqn:ODEs_2patch_hom}.
    \begin{enumerate}
        \item[(a)] If $r>2d+q+\frac{H}{2}$,  then $\MM(\theta)\leq \MM(0)$ for any $\theta\in[0,1]$. In other words, the biomass is maximized when harvesting effort is concentrated on the downstream patch.
        \item[(b)] If $r_{\text{crit}}\leq r<2d+q+\frac{H}{2}$, then $\mathcal{M}(\theta)\leq \mathcal{M}(1)$ for any $\theta\in[0,1]$.  In other words, the biomass is maximized when harvesting effort is concentrated on the upstream patch.
        \item[(c)] If $r=2d+q+\frac{H}{2}$, then $\MM(\theta)\leq \MM(0)=\MM(1)$ for any $\theta\in[0,1]$.  In other words, the biomass is maximized when harvesting effort is concentrated exclusively on either the upstream or the downstream patch.
    \end{enumerate}
\end{theorem}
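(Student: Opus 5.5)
Since (H3) gives $r_1=r_2=r$ and $c_1=c_2=c$, the weighted quantity in Theorem~\ref{theorem:M'} simplifies to
\[
\frac{r\sqrt{d+q}-r\sqrt{d}}{\sqrt{d+q}-\sqrt{d}}=r .
\]
Theorem~\ref{theorem:M'} therefore applies with $r$ compared directly against $r_m=2d+q-\frac{\sqrt d}{\sqrt{d+q}-\sqrt d}H$ and $r_M=2d+q+\frac{\sqrt{d+q}}{\sqrt{d+q}-\sqrt d}H$. The threshold in the statement satisfies $r_m<2d+q+\frac H2<r_M$, because $\frac{\sqrt{d+q}}{\sqrt{d+q}-\sqrt d}>1>\frac12$.

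I will split into three regimes.
\begin{itemize}
\item If $r>r_M$, Theorem~\ref{theorem:M'}(a) gives $\MM'<0$ on $[0,1]$, so the maximum is at $\theta=0$. This is consistent with (a), since $r_M>2d+q+\frac H2$.
\item If $r_{\text{crit}}\le r<r_m$, Theorem~\ref{theorem:M'}(b) gives the maximum at $\theta=1$. This is consistent with (b).
\item If $r_m\le r\le r_M$, Theorem~\ref{theorem:M'}(c) says $\MM$ has a single interior critical point, which is a minimum. Hence $\max_{[0,1]}\MM=\max\{\MM(0),\MM(1)\}$.
\end{itemize}
The whole theorem thus reduces to one claim: for $r$ in the intermediate window, $\MM(0)-\MM(1)$ has the sign of $r-(2d+q+\frac H2)$.

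To compare the endpoints, I will use the variable $t=u_1/u_2$ from the proof of Theorem~\ref{theorem:M'}. At each endpoint, $t$ is the unique root of the increasing function $f(\cdot,\theta)$. From \eqref{eqn:div_u2_het}, $cu_2=r-(1-\theta)H-d+(d+q)t$, so
\[
c\,\MM(\theta)=(1+t)\bigl(r-(1-\theta)H-d+(d+q)t\bigr).
\]
This gives $\MM(0)$ and $\MM(1)$ in terms of the roots $t_0$ and $t_1$ of the two cubics $t\,f(t,0)=0$ and $t\,f(t,1)=0$. I would then try to rewrite $c(\MM(0)-\MM(1))$ as a product $\bigl(r-2d-q-\frac H2\bigr)\cdot P$ with $P>0$. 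To find this factorization, first note how the data enter: $r-H$ replaces $r$ in patch 2 at $\theta=0$ and in patch 1 at $\theta=1$.

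A second useful tool is the summed identity $r\MM=c(u_1^2+u_2^2)+Hu_j$, where $u_j$ is the density in the harvested patch. Combined with the fact that $\MM'$ changes sign only at $t=\sqrt{d/(d+q)}$, this lets me write
\[
\MM(0)-\MM(1)=-\int_0^1\MM'(\theta)\,d\theta,
\]
using the explicit formula in Lemma~\ref{lem:M'_hetero} if the algebraic factorization is unwieldy.

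The main obstacle is establishing the exact switching value $r=2d+q+\frac H2$ for the endpoint comparison. I would first verify directly that $\MM(0)=\MM(1)$ at $r^\ast=2d+q+\frac H2$. The natural guess is a hidden symmetry at that value mapping the $\theta=0$ equilibrium to the $\theta=1$ equilibrium, which I would check by solving the two cubics explicitly. I would then show that $\frac{d}{dr}\bigl(\MM(0)-\MM(1)\bigr)>0$ on the window by implicitly differentiating the equilibrium equations in $r$, the same way Lemma~\ref{lem:M'_hetero} differentiates in $\theta$. This yields a unique sign change at $r^\ast$, which gives (a) and (b) in the intermediate window and the equality case (c).
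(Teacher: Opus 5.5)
Your reduction is right and matches the paper. Via Corollary~\ref{cor:M'}, everything comes down to the sign of $\MM(0)-\MM(1)$ for $r_m\le r\le r_M$. It is also true that $\MM(0)=\MM(1)$ at $r^\ast=2d+q+\frac H2$: the paper checks this by explicit computation in Lemma~\ref{lem:M0_M1_converse}, and the ``symmetry'' you anticipate is $b_1=a_1-\frac{H}{2c}$, $b_2=a_2+\frac{H}{2c}$ between the $\theta=0$ and $\theta=1$ equilibria.

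The gap is the step meant to make $r^\ast$ the only sign change. The claim that $\frac{d}{dr}\bigl(\MM(0)-\MM(1)\bigr)>0$ on the whole window is false in general, so implicit differentiation in $r$ cannot establish it. Expanding the equilibrium for large $r$ gives $c\bigl(\MM(0)-\MM(1)\bigr)=\frac{2qH}{r}+O(r^{-2})$. So the difference, which is positive just above $r^\ast$, must reach a maximum and then decrease, and nothing forces that maximum to lie beyond $r_M$. For small $d$ and large $H/q$ it does not: the maximum sits near $r\approx q+H$, where the upstream density at $\theta=1$ switches on, while $r_M\approx q+H+H\sqrt{d/q}$. For instance, take $d=0.01$, $q=1$, $H=10$, $c=1$, so that $r_{\text{crit}}\approx 9.90$ and $r_M\approx 12.12$. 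Numerically, $\MM(0)-\MM(1)\approx 2.37$ at $r=11.5$ but $\approx 2.28$ at $r=12.0$, both inside the window. Your other two suggestions do not close this. The factorization $(r-r^\ast)P$ with $P>0$ is never produced. The identity $\MM(0)-\MM(1)=-\int_0^1\MM'(\theta)\,d\theta$ carries no sign information in the regime where $\MM'$ changes sign.

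What you need is injectivity of the tie, not monotonicity: $\MM(0)=\MM(1)$ only when $r=r^\ast$. The paper proves this in Lemma~\ref{lem:M0_M1} using exactly the summed identity you list.
\begin{itemize}
\item Let $(a_1,a_2)$ and $(b_1,b_2)$ be the equilibria at $\theta=0$ and $\theta=1$, and assume $a_1+a_2=b_1+b_2$.
\item Subtracting $r(a_1+a_2)-c(a_1^2+a_2^2)=Ha_2$ from $r(b_1+b_2)-c(b_1^2+b_2^2)=Hb_1$ gives $2c(a_2-b_1)(a_1-b_1)=H(a_2-b_1)$.
\item The case $a_2=b_1$ is easily excluded, so $2c(a_1-b_1)=H$.
\item Substituting this into the difference of the two upstream equilibrium equations yields $(a_1-b_1)\bigl(r-2d-q-\frac H2\bigr)=0$.
\end{itemize}
Then $\MM(0)-\MM(1)$ is continuous and nonvanishing on each of the regions $r<r^\ast$ and $r>r^\ast$, within the persistence range. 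Its sign on each region is read off from Corollary~\ref{cor:M'}(a) for $r>r_M$ and Corollary~\ref{cor:M'}(b) for $r<r_m$.

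This continuity argument must run over the full parameter space $(d,q,r,c,H)$, as in Lemma~\ref{lem:medium_growth}, not over $r$ alone. For fixed $(d,q,c,H)$ the range $r_{\text{crit}}<r<r_m$ can be empty even when $r_{\text{crit}}<r^\ast$; for example, $d=1$, $q=0.1$, $H=4$ gives $r_m<0<r_{\text{crit}}<r^\ast$. In that case varying $r$ alone gives no anchor below $r^\ast$.
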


To prove Theorem \ref{thm:max_M}, we need a series of technical lemmas. We start with a corollary of Theorem \ref{theorem:M'}. Under the assumption (H3), the weighted difference in Theorem \ref{theorem:M'} becomes
\[
\frac{r_1\sqrt{d+q}-r_2\sqrt{d}}{\sqrt{d+q}-\sqrt{d}} = r.
\]
Thus, we have the following corollary.

\begin{corollary}\label{cor:M'}
      Consider system \eqref{eqn:ODEs_2patch_hom}.
    \begin{enumerate}
        \item[(a)] (Large growth rate) Suppose the growth rate $r$ satisfies $r>r_M$.
        Then $\MM'(\theta)< 0$ for any $\theta\in[0,1]$. 
        \item[(b)] (Small growth rate) Suppose the growth rate $r$ satisfies $r<r_m$
        Then $\MM'(\theta)> 0$ for any $\theta\in[0,1]$.
        \item[(c)] (Intermediate growth rate) Suppose $r_m\leq r\leq r_M$, then there exists a unique $\theta^*\in[0,1]$ such that $\MM'(\theta^*)=0$. Furthermore, $\theta^*$ is a minimum of $\MM(\theta)$ in the interval $[0,1]$ and $\MM(\theta)$ is maximized at either $\theta=0$ or $\theta=1$.
        \end{enumerate} 
\end{corollary}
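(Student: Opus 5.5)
This corollary is a direct specialization of Theorem~\ref{theorem:M'}, so the plan is to check that system \eqref{eqn:ODEs_2patch_hom} satisfies that theorem's hypotheses and then rewrite its threshold quantity. System \eqref{eqn:ODEs_2patch_hom} is exactly system \eqref{eqn:ODEs_2patch} with $r_1=r_2=r$ and $c_1=c_2=c$. In particular, the hypothesis $c_1=c_2$ of Theorem~\ref{theorem:M'} holds automatically.

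The first step is to compute the weighted difference. Under (H3) we have
\[
\frac{r_1\sqrt{d+q}-r_2\sqrt{d}}{\sqrt{d+q}-\sqrt{d}}=\frac{r(\sqrt{d+q}-\sqrt{d})}{\sqrt{d+q}-\sqrt{d}}=r .
\]
This is legitimate because $q>0$ gives $\sqrt{d+q}>\sqrt{d}$, so the denominator is nonzero. With this identity, the three hypotheses $r>r_M$, $r<r_m$ and $r_m\le r\le r_M$ are literally the hypotheses of parts (a), (b) and (c) of Theorem~\ref{theorem:M'}. The conclusions then transfer verbatim: $\MM'<0$ on $[0,1]$ in case (a); $\MM'>0$ on $[0,1]$ in case (b); and in case (c) there is a unique critical point $\theta^*$, which is a minimum, so the maximum of $\MM$ is attained at an endpoint.

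The one point needing care is the standing persistence assumption of Theorem~\ref{theorem:M'}. With $r_1=r_2=r$, the condition of Lemma~\ref{lem:persist} reads $r>\frac{d+q}{2d+q}H=r_{\text{crit}}$. I would state that this condition is implicitly assumed in the corollary, since the positive equilibrium, and hence $\MM(\theta)$, is only defined under it, as in Theorem~\ref{thm:max_M}(b).

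The proof of Theorem~\ref{theorem:M'} uses this condition only to guarantee that $u_1,u_2>0$ exist and depend smoothly on $\theta$. Smooth dependence follows from $\det(A)>0$ via the implicit function theorem. Beyond this, the argument only needs the sign of $t-\sqrt{d/(d+q)}$, and that sign analysis goes through unchanged. I therefore expect no real obstacle: the only substantive content is the algebraic identity above together with confirming that persistence holds, so that Lemma~\ref{lem:M'_hetero} applies.
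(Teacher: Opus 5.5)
Your proposal is correct and follows the paper's argument: the paper obtains this corollary simply by noting that under (H3) the weighted difference in Theorem~\ref{theorem:M'} reduces to $r$, so parts (a)--(c) carry over verbatim. Your remark that the persistence hypothesis $r>r_{\text{crit}}$ of Theorem~\ref{theorem:M'} is implicitly assumed is a reasonable clarification that the paper leaves unstated.
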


Focusing on the case when $r_m\leq r \leq r_M$, our next lemma establishes when the total biomass has the same value at $\theta=0$ and $\theta=1$. 

\begin{lemma}\label{lem:M0_M1}
    If $\MM(0)=\MM(1)$, we must have $r=2d+q+\frac{H}{2}$.
\end{lemma}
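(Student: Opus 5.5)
The plan is to use the ratio variable $t=u_1/u_2$ from the proof of Theorem \ref{theorem:M'}. In that variable the total biomass has an explicit form, and the condition $\MM(0)=\MM(1)$ becomes a simple algebraic relation between the two ratios. Adding equations \eqref{eqn:div_u1_het} and \eqref{eqn:div_u2_het} with $r_1=r_2=r$ gives
\[
c\,\MM(\theta)=2r-H-(2d+q)+\varphi(t(\theta)),\qquad \varphi(t):=\frac{d}{t}+(d+q)t .
\]
Write $t_0=t(0)$ and $t_1=t(1)$. Then $\MM(0)=\MM(1)$ is equivalent to $\varphi(t_0)=\varphi(t_1)$.

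The function $\varphi$ is strictly convex on $(0,\infty)$ and satisfies $\varphi(t)=\varphi(s^2/t)$ with $s^2=d/(d+q)$. Hence $\varphi(t_0)=\varphi(t_1)$ forces either $t_0=t_1$ or $t_0t_1=d/(d+q)$. I will exclude $t_0=t_1$ directly. From the definition of $f$ in the proof of Theorem \ref{theorem:M'}, $f(t,1)-f(t,0)=H(t+1)>0$, so $f(t,0)$ and $f(t,1)$ cannot vanish at the same $t$. Therefore $t_1=\dfrac{d}{(d+q)t_0}$.

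Next I substitute this into the two defining equations $f(t_0,0)=0$ and $f(t_1,1)=0$. I rewrite the second one in terms of $t_0$, using $d/t_1=(d+q)t_0$ and $(d+q)t_1^2=d^2/((d+q)t_0^2)$, and multiply it by $(d+q)t_0/d\cdot d/(d+q)=t_0$ after clearing denominators. This should give
\[
\frac{d(r-d)}{d+q}+\frac{d^2}{(d+q)t_0}-(r-H-d-q)t_0-(d+q)t_0^2=0 .
\]
Adding this to $f(t_0,0)=0$ cancels the quadratic terms and the $H$ terms. It should leave
\[
t_0-\frac{d}{(d+q)t_0}=\frac{r-2d-q}{d+q},\qquad\text{that is,}\qquad t_0-t_1=\frac{r-2d-q}{d+q}.
\]
Subtracting the two equations instead gives a second relation, which involves $H$ and the symmetric quantity $(d+q)t_0^2+d^2/((d+q)t_0^2)$.

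The final step is to eliminate $t_0$ between these two relations. I expect this elimination to be the main obstacle. Set $w=t_0-t_1$, and note that $t_0t_1=d/(d+q)$ is fixed. Then $t_0+t_1$ and $t_0^2+t_1^2$ are expressible through $w$: $t_0^2+t_1^2=w^2+2d/(d+q)$. I will rewrite the subtracted relation as a polynomial identity in $w$ and $t_0+t_1$ and then substitute $w=(r-2d-q)/(d+q)$. The expected outcome is that the identity collapses to $(r-2d-q)=H/2$ up to a nonzero factor, possibly $(t_0+t_1)$ or $(d+q)$; this gives $r=2d+q+H/2$.

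If the direct elimination becomes unwieldy, I have a fallback that avoids the second relation. Use the summed equilibrium identities $r\MM=c(u_1^2+u_2^2)+Hu_2$ at $\theta=0$ and $r\MM=c(u_1^2+u_2^2)+Hu_1$ at $\theta=1$. Express everything through $t_0$ and $t_1=s^2/t_0$ via \eqref{eqn:div_u1_het}–\eqref{eqn:div_u2_het}. Since $\MM$ is the same at both endpoints, this gives an equivalent second relation that may be easier to combine with $t_0-t_1=(r-2d-q)/(d+q)$.
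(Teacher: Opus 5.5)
Your proposal is correct and takes a different route from the paper. The paper never passes to ratios. With $(a_1,a_2)$ and $(b_1,b_2)$ the equilibria at $\theta=0$ and $\theta=1$, it adds each pair of equilibrium equations to get $r(a_1+a_2)-c(a_1^2+a_2^2)=Ha_2$ and $r(b_1+b_2)-c(b_1^2+b_2^2)=Hb_1$. It then uses $a_1+a_2=b_1+b_2$ to factor the difference as $2c(a_2-b_1)(a_1-b_1)=H(a_2-b_1)$ and excludes $a_2=b_1$ by a swap argument. Finally it substitutes $c(a_1-b_1)=H/2$ into the difference of the patch-1 equations to reach $(a_1-b_1)(r-2d-q-\tfrac H2)=0$. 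Your fallback is exactly this first step.

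You instead use $c\MM=2r-H-(2d+q)+\varphi(t)$ to turn the hypothesis into the reflection condition $t_0t_1=d/(d+q)$, after excluding $t_0=t_1$ via $f(t,1)-f(t,0)=H(t+1)>0$. These steps, and your sum identity $t_0-t_1=(r-2d-q)/(d+q)$, all check out.

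The elimination you flag as the main obstacle takes two lines, and you do not need the subtracted relation at all. Put $d/t_0=(d+q)t_1=(d+q)(t_0-w)$ into $f(t_0,0)=0$ to get $(d+q)t_0^2+(r-H-2d-q)t_0-d=0$. Meanwhile $t_0t_1=d/(d+q)$ with $t_1=t_0-w$ gives $(d+q)t_0^2-d=(d+q)wt_0=(r-2d-q)t_0$. Combining the two yields $t_0\bigl(2(r-2d-q)-H\bigr)=0$, so the nonzero factor is simply $t_0$, not $t_0+t_1$ or $d+q$.

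Your route buys structure. The tie $\MM(0)=\MM(1)$ is exactly the statement that $t_0$ and $t_1$ are mirror images under $t\mapsto d/((d+q)t)$. The mirror point is the critical ratio $\sqrt{d/(d+q)}$ that controls the sign of $\MM'$ in Lemma~\ref{lem:M'_hetero} and Theorem~\ref{theorem:M'}. The same identities run backwards, plus uniqueness of the admissible root of $f(\cdot,1)$, also give the converse Lemma~\ref{lem:M0_M1_converse} without solving quadratics explicitly. The paper's route is more elementary, needs no auxiliary function $f$, and yields the quantitative byproduct $c(a_1-b_1)=H/2$.
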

\begin{proof}
    Let $(a_1,a_2)$ be the positive equilibrium when $\theta=0$, i.e. it is the solution of 
    \begin{subequations}
    \begin{align}
        &a_1(r-c a_1)-(d+q)a_1+da_2=0\label{eqn:a1}\\
        &a_2(r-H-ca_2)+(d+q)a_1-da_2=0.\label{eqn:a2}
    \end{align}   
    \end{subequations}
    Adding equations \eqref{eqn:a1} and \eqref{eqn:a2} yields
    \begin{equation}\label{eqn:a1+a2}
     r(a_1+a_2)-c(a_1^2+a_2^2)=a_2H.   
    \end{equation}
    Let $(b_1,b_2)$ be the positive equilibrium when $\theta=1$, i.e. it is the solution of 
    \begin{subequations}
    \begin{align}
        &b_1(r-H-c b_1)-(d+q)b_1+db_2=0\label{eqn:b1}\\
        &b_2(r-cb_2)+(d+q)b_1-db_2=0.\label{eqn:b2}
    \end{align}   
    \end{subequations}
    Adding equations \eqref{eqn:b1} and \eqref{eqn:b2} yields
    \begin{equation}\label{eqn:b1+b2}
     r(b_1+b_2)-c(b_1^2+b_2^2)=b_1H.   
    \end{equation}
    Since $\MM(0)=\MM(1)$, we have $a_1+a_2=b_1+b_2$. Subtracting \eqref{eqn:a1+a2} from \eqref{eqn:b1+b2} yields
    \begin{equation}\label{eqn:a2-b1}
    c(b_1^2+b_2^2-a_1^2-a_2^2) = H(a_2-b_1).
    \end{equation}
    By using $a_2-b_1=b_2-a_1$ and $a_1-b_1=b_2-a_2$ in equation \eqref{eqn:a2-b1}, we obtain
    \begin{equation}\label{eqn:a2-b1_simplified}
    c(a_2-b_1)(b_2+a_1-b_1-a_2)=H(a_2-b_1) \implies 2c(a_2-b_1)(a_1-b_1) = H(a_2-b_1).
    \end{equation}
    We first argue that $a_2\neq b_1$. Assume by contradiction that $a_2=b_1$. This further implies $a_1=b_2$. Subtracting \eqref{eqn:a1} from \eqref{eqn:b2} yields
    \[
    (d+q)a_2-da_1 + (d+q)a_1 -da_2=0,
    \]
    which leads to $a_1+a_2=0$, a contradiction. Thus equation \eqref{eqn:a2-b1_simplified} must imply $2c(a_1-b_1)=H$. Subtracting \eqref{eqn:b1} from \eqref{eqn:a1} and using $a_2-b_2=b_1-a_1$ yields
    \begin{equation}\label{eqn:a1-b1}
    r(a_1-b_1)-c(a_1-b_1)(a_1+b_1) -(2d+q)(a_1-b_1)+Hb_1=0.
    \end{equation}
    By substituting $c(a_1-b_1)=H/2$ into equation \eqref{eqn:a1-b1}, we obtain
    \[
    (a_1-b_1)\bigg(r-2d-q-\frac{H}{2}\bigg)=0.
    \]
    It is easy to see that $a_1\neq b_1$ from equation \eqref{eqn:a1-b1}, thus we must have $r=2d+q+\frac{H}{2}$.
\end{proof}

Next, we show that the converse of Lemma \ref{lem:M0_M1} is also true.

\begin{lemma}\label{lem:M0_M1_converse}
If $r=2d+q+\frac{H}{2}$, then we have $\MM(0)=\MM(1)$.    
\end{lemma}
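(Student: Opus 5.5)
The plan is to construct the $\theta=1$ equilibrium explicitly from the $\theta=0$ equilibrium by a constant shift of mass from patch~1 to patch~2, and then invoke uniqueness of the positive equilibrium. The proof of Lemma~\ref{lem:M0_M1} suggests the shift: there we found $c(a_1-b_1)=H/2$ and $a_1+a_2=b_1+b_2$. So with $\delta:=\frac{H}{2c}$ I expect
\[
b_1=a_1-\delta,\qquad b_2=a_2+\delta .
\]

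First I would show that the $\theta=0$ system \eqref{eqn:a1}--\eqref{eqn:a2} has a positive equilibrium $(a_1,a_2)$ when $r=2d+q+\frac H2$. This does not follow directly from Lemma~\ref{lem:persist}, since $r$ may be smaller than $r_{\text{crit}}$ when $H$ is large. Instead, the linearization at the origin is an irreducible Metzler matrix, so its spectral bound is at least its largest diagonal entry. The $(1,1)$ entry is $r-(d+q)=d+\frac H2>0$. Hence the trivial equilibrium is unstable and the (unique) positive equilibrium $(a_1,a_2)$ exists and is globally stable, as recalled in Section~\ref{section model}. The same argument at $\theta=1$, using the $(2,2)$ entry $r-d=d+q+\frac H2>0$, shows that the $\theta=1$ system also has a unique positive equilibrium.

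Next I would verify by direct substitution that $(b_1,b_2)$ solves \eqref{eqn:b1}--\eqref{eqn:b2}. For \eqref{eqn:b1}, note that $r-H-cb_1=r-ca_1-\frac H2$. Expanding and subtracting \eqref{eqn:a1}, the terms $-\frac H2 a_1$ and $c\delta a_1$ cancel, leaving
\[
\delta\Big(-r+\tfrac H2+2d+q\Big),
\]
which vanishes exactly when $r=2d+q+\frac H2$. The computation for \eqref{eqn:b2} against \eqref{eqn:a2} is analogous: now $r-cb_2=r-H-ca_2+\frac H2$, and the same cancellation occurs.

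The main obstacle is positivity of $b_1$, that is, showing $a_1>\frac{H}{2c}$. Dividing \eqref{eqn:a1} by $a_1$ gives
\[
ca_1=r-(d+q)+\frac{da_2}{a_1}>r-(d+q)=d+\frac H2>\frac H2,
\]
so $b_1>0$; and $b_2=a_2+\delta>0$ trivially. Thus $(b_1,b_2)$ is a positive equilibrium of the $\theta=1$ system. By uniqueness it is $\bm u^*(1)$, and therefore
\[
\MM(1)=b_1+b_2=a_1+a_2=\MM(0).
\]
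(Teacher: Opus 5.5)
Your proof is correct, and it takes a different route from the paper.

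\textbf{The paper's route.} The paper computes both sides explicitly. At $r=2d+q+\frac H2$ it subtracts the two $\theta=0$ equilibrium equations to get $c(a_2-a_1)=q-\frac H2$. It substitutes back to obtain a quadratic for $a_1$, takes the larger root, and writes $\MM(0)$ in closed form with radicand $(2d+\frac H2)^2+4d(q-\frac H2)$. The same computation at $\theta=1$ gives radicand $(2d-\frac H2)^2+4d(q+\frac H2)$. Equality follows because both radicands equal $4d^2+4dq+\frac{H^2}{4}$.

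\textbf{Your route.} You exhibit the $\theta=1$ equilibrium as the rigid translate $(a_1-\frac{H}{2c},\,a_2+\frac{H}{2c})$ of the $\theta=0$ one, and then invoke uniqueness of the positive equilibrium.

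\textbf{What each approach buys.} Your version has several advantages:
\begin{itemize}
\item It avoids radicals and the choice of root, which the paper makes without justification.
\item It explicitly secures existence of the positive equilibria even when $r<r_{\text{crit}}$, a point the paper passes over.
\item It shows that the necessary relations $a_1+a_2=b_1+b_2$ and $c(a_1-b_1)=\frac H2$ extracted in Lemma~\ref{lem:M0_M1} are actually attained. The two lemmas then read as a clean ``if and only if.''
\end{itemize}
The price is that you rely on uniqueness of the positive equilibrium, which the paper treats as known from Section~\ref{section model}. In exchange, the paper's computation yields the common value explicitly, namely $\MM(0)=\MM(1)=\frac{1}{c}\bigl(2d+q+\sqrt{4d^2+4dq+H^2/4}\bigr)$. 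The paper's displayed denominator $2c$ is a slip.
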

\begin{proof}
Similar to Lemma \ref{lem:M0_M1}, let $(a_1,a_2)$ be the positive equilibrium when $\theta=0$, i.e. it is the solution of
\begin{subequations}
    \begin{align}
        &a_1(2d+q+\frac{H}{2}-c a_1)-(d+q)a_1+da_2=0\label{eqn:a1'}\\
        &a_2(2d+q-\frac{H}{2}-ca_2)+(d+q)a_1-da_2=0.\label{eqn:a2'}
    \end{align}   
    \end{subequations}
Subtracting \eqref{eqn:a2'} from \eqref{eqn:a1'} yields
\[
\frac{H}{2}(a_1+a_2)-c(a_1+a_2)(a_1-a_2) - q(a_1+a_2)=0 \implies a_2-a_1=\frac{q-\frac{H}{2}}{c}.
\]
We substitute this into \eqref{eqn:a1'} and solve for $a_1$ to obtain
\[
a_1=\frac{2d+\frac{H}{2}+\sqrt{(2d+\frac{H}{2})^2+4d(q-\frac{H}{2})}}{2c},
\]
which gives us
\[
\MM(0)=a_1+a_2=2a_1+\frac{q-\frac{H}{2}}{c}=\frac{2d+q+\sqrt{(2d+\frac{H}{2})^2+4d(q-\frac{H}{2})}}{2c}.
\]
Using similar calculations for the case $\theta=1$ yields 
\[
\MM(1)=\frac{2d+q+\sqrt{(2d-\frac{H}{2})^2+4d(q+\frac{H}{2})}}{2c}.
\]
Finally, it is easy to check that $\MM(0)=\MM(1)$.
\end{proof}

\begin{lemma}\label{lem:medium_growth}
When $2d+q+\frac{H}{2}<r\leq r_M$, the total biomass $\MM(\theta)$ is maximized at $\theta=0$. When $r_m\leq r<2d+q+\frac{H}{2}$, the total biomass $\MM(\theta)$ is maximized at $\theta=1$. 
\end{lemma}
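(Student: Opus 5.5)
By Corollary~\ref{cor:M'}(c), whenever $r_m\le r\le r_M$ the function $\MM$ has a single critical point $\theta^*$ in $[0,1]$, and it is a minimum. Hence the maximum of $\MM$ on $[0,1]$ is $\max\{\MM(0),\MM(1)\}$. The lemma therefore reduces to deciding the sign of
\[
D(r):=\MM(0)-\MM(1)
\]
on the two subintervals. Here $D$ is viewed as a function of $r$, with $d,q,H,c$ held fixed.

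The plan is a continuity and intermediate value argument in $r$. The equilibria at $\theta=0$ and at $\theta=1$ are unique, positive and linearly stable throughout the persistence region. This holds because $\det(A)>0$ for the matrix in Lemma~\ref{lem:M'_hetero}, or equivalently by the implicit function theorem. So $\MM(0)$ and $\MM(1)$ depend continuously (indeed smoothly) on $r$, and $D$ is continuous on $[r_m,r_M]$, or on the part of it lying in the persistence region.

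By Lemma~\ref{lem:M0_M1}, $D(r)=0$ forces $r=2d+q+\frac H2$. By Lemma~\ref{lem:M0_M1_converse}, this value is indeed a zero. So $D$ vanishes at exactly one point of the interval. Consequently $D$ has constant sign on $(2d+q+\frac H2,\,r_M]$ and constant sign on $[r_m,\,2d+q+\frac H2)$, and it only remains to determine each sign by evaluating at one endpoint.

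At $r=r_M$, I will use continuity from the right together with Corollary~\ref{cor:M'}(a). For every $r>r_M$ we have $\MM'(\theta)<0$ on $[0,1]$, so $D(r)>0$. Letting $r\downarrow r_M$ gives $D(r_M)\ge 0$. Since $r_M\neq 2d+q+\frac H2$ (because $r_M-(2d+q)=\frac{\sqrt{d+q}}{\sqrt{d+q}-\sqrt d}H>\frac H2$), the limit cannot vanish, so $D(r_M)>0$. By constancy of sign, $D>0$ on $(2d+q+\frac H2,r_M]$, and $\theta=0$ is the maximizer there.

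The lower interval is handled symmetrically. We have $r_m<2d+q+\frac H2$ because $r_m-(2d+q)=-\frac{\sqrt d}{\sqrt{d+q}-\sqrt d}H<\frac H2$. For $r<r_m$, Corollary~\ref{cor:M'}(b) gives $\MM'>0$ and hence $D<0$. Letting $r\uparrow r_m$ then yields $D(r_m)<0$, and therefore $D<0$ on $[r_m,2d+q+\frac H2)$, so $\theta=1$ is the maximizer.

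The main obstacle is this endpoint step, since it needs values of $r$ just outside $[r_m,r_M]$ to remain admissible. Near $r_M$ this is harmless, because larger $r$ only helps persistence. Near $r_m$, however, $r_m$ may fall below $r_{\text{crit}}$, and the lemma's range then effectively begins at $r_{\text{crit}}$.

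If the endpoint step at $r_m$ fails for this reason, I would get the sign differently: show $D<0$ near the lower edge of the persistence region by comparing extinction thresholds at $\theta=0$ and $\theta=1$. Alternatively, I would compute $\frac{dD}{dr}$ at $r=2d+q+\frac H2$ using the explicit equilibria from Lemma~\ref{lem:M0_M1_converse} and show it is positive. That transversality alone, combined with the uniqueness of the zero of $D$, fixes the sign on both sides and makes the endpoint arguments unnecessary. I expect this derivative computation to be the messiest step.
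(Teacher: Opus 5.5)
You correctly located the weak spot, and it is a genuine gap. Your treatment of the upper interval is fine, but on the lower interval your main argument fails for a large set of parameters.

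Because you freeze $(d,q,c,H)$ and move only $r$, the sign of $D$ on $[r_m,2d+q+\frac H2)$ has to be imported from admissible values $r<r_m$, and often there are none. Indeed $r_m\to-\infty$ as $q\to 0$ or $H\to\infty$. By contrast, $\MM(0)$ is only defined for $r>r_0:=-\lambda_{\max}(L_0)>0$, where $L_0=\begin{bmatrix}-(d+q)&d\\ d+q&-d-H\end{bmatrix}$. For example, $d=q=1$, $H=4$ gives $r_m=-1-4\sqrt2\approx-6.66$ and $r_0=\frac{7-\sqrt{17}}{2}\approx 1.44$. Meanwhile the lemma's lower range, within the region $r>r_{\text{crit}}=\frac83$ where Corollary~\ref{cor:M'}(c) is available, is $(\frac83,5)$. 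There Corollary~\ref{cor:M'}(b) is vacuous, and nothing in your argument fixes the sign of $D$. Your two fallbacks are only announced. The transversality route is speculative, since nothing you have rules out $\frac{dD}{dr}=0$ at $r=2d+q+\frac H2$.

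The paper closes this gap by not fixing $(d,q,c,H)$. It treats $\MM(0)-\MM(1)$ as a continuous function on the whole parameter space $(d,q,r,c,H)$. There the region $R_1=\{r<2d+q+\frac H2\}$ is connected and contains points with admissible $r<r_m$: take $H$ small, so that $r_m\approx 2d+q$ sits well above the persistence threshold. One application of Corollary~\ref{cor:M'}(b) at such a point then gives $D<0$ on all of $R_1$. For full rigor, $R_1$ should be intersected with the domain $\{r>r_0\}$ where $D$ is defined. This intersection is still connected, because $r_0\le -\frac{1}{2}\operatorname{tr}L_0=\frac{2d+q+H}{2}<2d+q+\frac H2$.

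Alternatively, your first fallback closes the gap in a few lines. The matrices $L_0$ and $L_1=\begin{bmatrix}-(d+q)-H&d\\ d+q&-d\end{bmatrix}$ have the same trace $-(2d+q+H)$, but $\det L_0=(d+q)H>dH=\det L_1$. Hence $\lambda_{\max}(L_0)<\lambda_{\max}(L_1)$. So as $r\downarrow r_0$, the $\theta=0$ equilibrium collapses to $0$, while the $\theta=1$ equilibrium stays positive. To see the collapse, pair the equilibrium equations with the left Perron vector $w$ of $rI+L_0$; this gives $w^{T}u\le (r-r_0)(w_1+w_2)/c$. Therefore $D<0$ near $r_0$, and constancy of sign on $(r_0,2d+q+\frac H2)$ gives $D<0$ throughout.
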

\begin{proof}
    From Lemma \ref{lem:M0_M1}, we have $\{(d,q,r,c,H):\MM(0)=\MM(1)\} = \{(d,q,r,c,H): r=2d+q+\frac{H}{2}\}$. Since $\MM(0)-\MM(1)$ depends continuously on the parameters $d,q,r,c,H$, it must have the same sign in each region $R_1=\{(d,q,r,c,H): r<2d+q+\frac{H}{2}\}$ and $R_2=\{(d,q,r,c,H): r>2d+q+\frac{H}{2}\}$. 

    It is easy to see that $\{(d,q,r,c,H): r<r_m\}\subset R_1$, so Corollary \ref{cor:M'} (b) implies $\MM(0)<\MM(1)$ in $R_1$. Similarly, since $\{(d,q,r,c,H): r>r_M\}\subset R_2$, Corollary \ref{cor:M'} (a) implies $\MM(0)>\MM(1)$ in $R_2$.  Finally, using Corollary \ref{cor:M'} (c), we can conclude that $\MM(\theta)$ is maximized at $\theta=0$ when $2d+q+\frac{H}{2}<r\leq r_M$ and maximized at $\theta=1$ when $r_m\leq r<2d+q+\frac{H}{2}$.
\end{proof}

\begin{figure}[htbp]
    \centering
\begin{subfigure}[t]{0.48\textwidth}
        \centering
\includegraphics[width=1\linewidth]{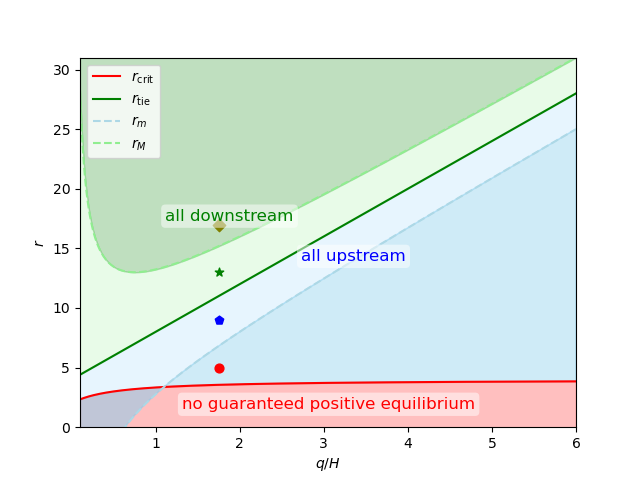}
\vspace{.09in}
        \caption{Illustration of an instance of Theorem~\ref{thm:max_M}.}
        \label{fig:left_figure_B}
    \end{subfigure}
\begin{subfigure}[t]{0.48\textwidth}
\vspace{-5.5cm}
    \includegraphics[width=0.45\linewidth]{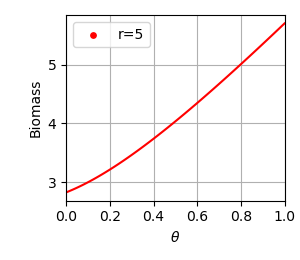}
    \includegraphics[width=0.45\linewidth]{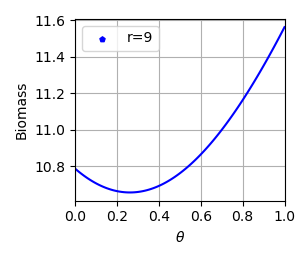}
        
        \vspace{-0.5em}

    \includegraphics[width=0.45\linewidth]{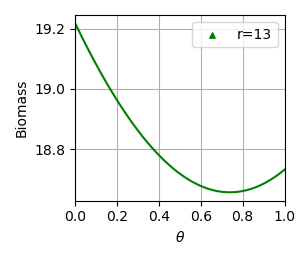}
    \includegraphics[width=0.45\linewidth]{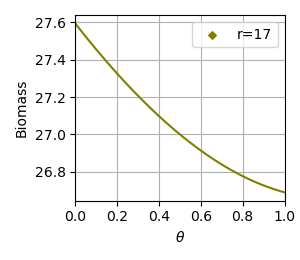}
    \caption{Biomass vs harvesting}        \label{fig:right_figure_B}

        \end{subfigure}
    \caption{In these two figures, $r_{tie}=2d+q+\frac{H}{2}$, $d = 1$ and $H=4$, the four critical curves $r_{crit}$, $r_m,r_M,r_{tie}$ in Figure \ref{fig:left_figure_B} separate the region to  areas where the strategies for maximizing biomass are different. Four specific points are chosen with $q/H=1.75$ and different $r$ values. Figure \ref{fig:right_figure_B} shows the total biomass against the harvest parameter $\theta$ at these four points with $c=1$ and $q=7$. }
    \label{fig:2patch_biomass}
\end{figure}

The proof of Theorem \ref{thm:max_M} now follows directly from Corollary  \ref{cor:M'}, Lemma \ref{lem:M0_M1_converse}, and Lemma \ref{lem:medium_growth}. \textbf{Figure 1} provides an illustration of Theorem~\ref{thm:max_M}. In panel~(a), the green curve \(r_{\mathrm{tie}}=2d+q+\tfrac{H}{2}\) partitions the \((q/H,\; r)\)-plane into management regimes. Above \(r_{\mathrm{tie}}\) (green shading), the biomass-maximizing strategy is to harvest exclusively downstream (\(\theta=0\)). Biologically, advection feeds individuals into the downstream patch, where densities, thus crowding losses are highest, so removing biomass there frees capacity while preserving upstream production. Below \(r_{\mathrm{tie}}\) but above the persistence threshold \(r_{\mathrm{crit}}=\tfrac{H(d+q)}{2d+q}\) (blue shading), harvesting exclusively upstream (\(\theta=1\)) is optimal. Here, the intrinsic growth is relatively modest, and protecting the upstream ``source'' patch sustains system-wide production before individuals are flushed downstream. The red band below \(r_{\mathrm{crit}}\) indicates parameters with no guaranteed positive equilibrium. The four markers at \(q/H=1.75\) (red, blue, green, gold) correspond to panel~(b), which plots total biomass versus the harvest split \(\theta\) (with \(d=1\), \(H=4\), \(c=1\), \(q=7\)). Consistent with Corollary~\ref{cor:M'}, for small \(r\) (e.g., \(r=5\), red), biomass increases as effort shifts upstream---protecting the source boosts overall standing stock. For large \(r\) (e.g., \(r=17\), gold), biomass decreases as effort moves upstream---downstream harvest removes the surplus where crowding is strongest. At intermediate \(r\) (e.g., \(r=13\), green), the biomass first declines and then rises with \(\theta\), reflecting a transition in residence time and crowding balance across patches as the budget is reallocated. Boundaries \(r_m\) and \(r_M\) (dashed lines in panel (a)) demarcate these monotonicity regimes predicted by Corollary~\ref{cor:M'}.

\subsection{Maximizing the yield when the patches are homogeneous}\label{sec:yield}

We now turn to the optimal harvesting strategy if the goal is to maximize the yield $\mathcal{Y}(\theta)=\theta Hu_1+(1-\theta)Hu_2$. Here, we focus on the case in which the patches are homogeneous so that we are able to obtain analytical results.

Under assumption (H3), we derive an expression for $\YY'(\theta)$ in the following lemma.

\begin{lemma}\label{lem:Y'}
We have the equality
\[
\frac{1}{H}\YY'(\theta)=\bigg[\frac{H(1-2\theta)-q}{c}+ \frac{H(1-2\theta)cu_1^2u_2^2}{\det(A)}\bigg]- ((d+q)u_1^2-du_2^2)\bigg[\frac{1}{cu_1u_2}+\frac{H(\theta u_1+(1-\theta)u_2)}{\det(A)}\bigg],
\]
where 
\begin{equation}\label{matrix A_hom}
A=\begin{bmatrix}
    -(cu_1^2+du_2) & du_1\\
    (d+q)u_2 & -(cu_2^2+(d+q)u_1)
\end{bmatrix}.
\end{equation}

\end{lemma}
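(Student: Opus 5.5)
The plan is to differentiate $\YY(\theta)=H\big(\theta u_1+(1-\theta)u_2\big)$ directly and substitute two ingredients: the derivative formulas for $u_1',u_2'$ already obtained in the proof of Lemma~\ref{lem:M'_hetero}, and an expression for $u_1-u_2$ taken from the equilibrium equations. The product rule gives
\[
\frac{1}{H}\YY'(\theta)=(u_1-u_2)+\theta u_1'+(1-\theta)u_2'.
\]
I will show that the term $u_1-u_2$ produces the $1/c$ and $1/(cu_1u_2)$ parts of the claimed formula, and that $\theta u_1'+(1-\theta)u_2'$ produces the two parts with denominator $\det(A)$. Throughout, write $K:=(d+q)u_1^2-du_2^2$.

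\textbf{Step 1: the term $u_1-u_2$.} Under (H3), divide \eqref{eqn:u1} by $u_1$ and \eqref{eqn:u2} by $u_2$, exactly as in the proof of Theorem~\ref{theorem:M'}, to get
\[
cu_1=r-\theta H-(d+q)+\frac{du_2}{u_1},\qquad cu_2=r-(1-\theta)H-d+\frac{(d+q)u_1}{u_2}.
\]
Subtracting the second from the first eliminates $r$:
\[
c(u_1-u_2)=H(1-2\theta)-q+\frac{du_2}{u_1}-\frac{(d+q)u_1}{u_2}=H(1-2\theta)-q-\frac{K}{u_1u_2}.
\]
Dividing by $c$ yields $\frac{H(1-2\theta)-q}{c}-\frac{K}{cu_1u_2}$, which is precisely the first bracketed term together with the $\frac{1}{cu_1u_2}$ part of the second.

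\textbf{Step 2: the weighted derivative.} Setting $c_1=c_2=c$ in the explicit solution from Lemma~\ref{lem:M'_hetero}, and noting that the matrix $A$ there reduces to \eqref{matrix A_hom}, we have
\begin{align*}
u_1'&=-\frac{H}{\det(A)}\big[cu_1^2u_2^2+(d+q)u_1^3-du_1u_2^2\big],\\
u_2'&=-\frac{H}{\det(A)}\big[(d+q)u_1^2u_2-cu_1^2u_2^2-du_2^3\big].
\end{align*}
Form $\theta u_1'+(1-\theta)u_2'$ and regroup the bracket:
\begin{itemize}
\item the $cu_1^2u_2^2$ terms combine to $(2\theta-1)cu_1^2u_2^2$;
\item the remaining terms from $u_1'$ factor as $\theta u_1 K$;
\item the remaining terms from $u_2'$ factor as $(1-\theta)u_2 K$.
\end{itemize}
This gives
\[
\theta u_1'+(1-\theta)u_2'=\frac{H(1-2\theta)cu_1^2u_2^2}{\det(A)}-\frac{H\big(\theta u_1+(1-\theta)u_2\big)K}{\det(A)}.
\]

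\textbf{Step 3: assembly.} Adding the results of Steps 1 and 2 and collecting the two terms proportional to $K$ gives exactly the stated identity.

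The only step with any risk is the regrouping in Step 2. Its difficulty is bookkeeping of signs, in particular making sure the $cu_1^2u_2^2$ contributions of $u_1'$ and $u_2'$ enter with opposite signs so that they combine into the coefficient $(1-2\theta)$. No analytic difficulty is expected: $\det(A)>0$ was already established in Lemma~\ref{lem:M'_hetero}, so every division is legitimate.
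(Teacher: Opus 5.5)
Your proposal is correct and follows the paper's proof essentially step for step. You use the product rule, then take $u_1-u_2$ from the difference of the divided equilibrium equations, then specialize the explicit $u_1',u_2'$ from Lemma~\ref{lem:M'_hetero} to $c_1=c_2=c$ and regroup. Your Step 1 also writes the intermediate identity correctly as $\frac{du_2}{u_1}-\frac{(d+q)u_1}{u_2}$; the paper's display has a typo ($-(d+q)u_1$ in place of $-(d+q)u_1/u_2$), but its final expression agrees with yours.
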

\begin{proof}
Differentiating the equation $\YY(\theta)=\theta Hu_1+(1-\theta)Hu_2$ yields
\[
\frac{1}{H}\YY'(\theta)= u_1-u_2 + \theta u_1' +(1-\theta)u_2'.
\]
By subtracting \eqref{eqn:div_u2_het} from \eqref{eqn:div_u1_het} and using assumption (H3), we obtain
\begin{equation}\label{eqn:u1-u2}
    c(u_1-u_2)=H(1-2\theta)-q +d\frac{u_2}{u_1} - (d+q)u_1 \implies u_1-u_2=\frac{(1-2\theta)H-q}{c} - \frac{(d+q)u_1^2-du_2^2}{cu_1u_2}.
\end{equation}
Using the expressions for $u_1'$ and $u_2'$ in the proof of Lemma \ref{lem:M'_hetero} yields
\begin{equation}\label{eqn:theta_u1_u2}
    \theta u_1' +(1-\theta)u_2'= - \frac{H}{\det(A)}[(2\theta-1)cu_1^2u_2^2+ (\theta u_1+(1-\theta)u_2)((d+q)u_1^2-du_2^2)].
\end{equation}
Combining equations \eqref{eqn:u1-u2} and \eqref{eqn:theta_u1_u2} and rearranging terms gives us the desired result.
\end{proof}

Lemma \ref{lem:Y'} allows us to establish a sufficient condition in which $\YY'(\theta)<0$ and thus the yield is maximized when harvesting exclusively downstream.

\begin{theorem}\label{thm:max_Y_downstream}
Suppose that $r>r_M$, where $r_M$ is defined in Lemma \ref{theorem:M'}, and that $q\geq 2H$. Then we have $\YY(\theta)\leq \YY(0)$ for any $\theta\in[0,1]$. In other words, the yield is maximized when harvesting is applied exclusively to the downstream patch.    
\end{theorem}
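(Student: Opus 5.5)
The plan is to show directly that $\YY'(\theta)<0$ for every $\theta\in[0,1]$, using the formula of Lemma \ref{lem:Y'}. This makes $\YY$ strictly decreasing on $[0,1]$, so $\YY(\theta)\le\YY(0)$. The formula splits $\frac1H\YY'(\theta)$ into a first bracket and a second product term, and I will show that each is negative or nonpositive.

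\emph{Second term.} Since $c_1=c_2=c$ and $r>r_M$, the argument of Case (a) of Theorem \ref{theorem:M'} (equivalently Corollary \ref{cor:M'}(a)) gives $t=u_1/u_2>\sqrt{d/(d+q)}$ for all $\theta\in[0,1]$. Hence $(d+q)u_1^2-du_2^2>0$. In the bracket multiplying it, $\frac{1}{cu_1u_2}>0$ and $\frac{H(\theta u_1+(1-\theta)u_2)}{\det(A)}>0$, because $\det(A)>0$ and $u_1,u_2>0$. So the whole second term
\[
-\big((d+q)u_1^2-du_2^2\big)\bigg[\frac{1}{cu_1u_2}+\frac{H(\theta u_1+(1-\theta)u_2)}{\det(A)}\bigg]
\]
is strictly negative.

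\emph{First bracket.} I will prove that
\[
\frac{H(1-2\theta)-q}{c}+\frac{H(1-2\theta)cu_1^2u_2^2}{\det(A)}\le 0 .
\]
If $\theta\ge 1/2$, both summands are nonpositive, since $1-2\theta\le 0$ and $q>0$. If $\theta<1/2$, the key step is to compute $\det(A)$ from \eqref{matrix A_hom}:
\[
\det(A)=(cu_1^2+du_2)(cu_2^2+(d+q)u_1)-d(d+q)u_1u_2
= c^2u_1^2u_2^2+c(d+q)u_1^3+cdu_2^3 ,
\]
which gives $\det(A)>c^2u_1^2u_2^2$ and hence $\frac{cu_1^2u_2^2}{\det(A)}<\frac1c$. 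The bracket is then bounded by
\[
\frac{H(1-2\theta)-q}{c}+\frac{H(1-2\theta)}{c}=\frac{2H(1-2\theta)-q}{c}\le\frac{2H-q}{c}\le 0,
\]
using the hypothesis $q\ge 2H$.

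Adding the two pieces gives $\YY'(\theta)<0$ on $[0,1]$, which proves the claim. The only real obstacle is controlling the positive contribution $H(1-2\theta)cu_1^2u_2^2/\det(A)$ when $\theta<1/2$. The explicit expansion of $\det(A)$ handles it, since the cross terms $d(d+q)u_1u_2$ cancel and leave only positive terms beyond $c^2u_1^2u_2^2$. This is exactly where the condition $q\ge 2H$ enters. The condition $r>r_M$ is used only to fix the sign of $(d+q)u_1^2-du_2^2$.
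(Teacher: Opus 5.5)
Your proposal is correct and follows the paper's proof essentially step for step. You get the sign of the second term from Case (a) of Theorem~\ref{theorem:M'}, dispose of $\theta\ge 1/2$ trivially, and for $\theta<1/2$ use $\det(A)>c^2u_1^2u_2^2$ together with $q\ge 2H$; your explicit expansion $\det(A)=c^2u_1^2u_2^2+c(d+q)u_1^3+cdu_2^3$ simply makes the paper's one-line inequality transparent.
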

\begin{proof}
    Since $r>r_M$, we have $t=\frac{u_1}{u_2}>\sqrt{\frac{d}{d+q}}$ from the proof of Theorem \ref{theorem:M'} (a). Thus we have 
    \[
    ((d+q)u_1^2-du_2^2)\bigg[\frac{1}{cu_1u_2}+\frac{H(\theta u_1+(1-\theta)u_2)}{\det(A)}\bigg]>0
    \]
    since $\det(A)>0$. 
    If $\theta \geq 1/2$, then it is clear that 
    \[
    \frac{H(1-2\theta)-q}{c}+ \frac{H(1-2\theta)cu_1^2u_2^2}{\det(A)} <0.
    \]
    On the other hand, if $\theta <1/2$, we first notice
    \[\det(A)=(cu_1^2+du_2)(cu_2^2+(d+q)u_1)-d(d+q)u_1u_2>c^2u_1^2u_2^2.
    \]
    Thus we have
    \[
    \frac{H(1-2\theta)-q}{c}+ \frac{H(1-2\theta)cu_1^2u_2^2}{\det(A)} < \frac{2H(1-2\theta)-q}{c} \leq \frac{2H-q}{c}\leq 0.
    \]
    Therefore, by using Lemma \ref{lem:Y'}, we can conclude $\YY'(\theta) <0$ for any $\theta\in[0,1]$. This further implies $\YY(\theta)$ is maximized when $\theta=0$.
\end{proof}

The result in Theorem \ref{thm:max_Y_downstream} is quite intuitive. The condition $q\geq 2H$ (large advection compared to the harvesting effort) allows the biomass of the downstream patch to be large compared to the upstream patch even after harvesting. The condition $r>r_M$ (large growth rate) allows the total biomass to be maximized when harvesting exclusively downstream. Together, they naturally suggests the yield is also maximized when harvesting downstream.  

When the condition in Theorem \ref{thm:max_Y_downstream} is not satisfied, we can still use Lemma \ref{lem:Y'} to obtain some partial information on the distribution of harvesting effort that maximizes the yield. We state two such results in the corollary below. The proof of the corollary follows directly from Lemma \ref{lem:Y'}.

\begin{corollary}\label{cor:max_Y}
We have the following statements on the yield-maximizing effort distribution.
    \begin{enumerate}
        \item[(a)] (Large growth rate) Suppose that $r>r_M$. If $\theta\geq 1/2$, then $\YY'(\theta)<0$. Thus the yield is maximized at some $\theta^*<1/2$. In other words, the yield is maximized when we harvest more on the downstream patch.
        \item[(b)] (Small growth rate) Suppose that $r<r_m$. If $H(1-2\theta)\geq q$, or equivalently $\theta\leq\frac{1}{2}-\frac{q}{2H}$, then $\YY'(\theta)>0$. Thus the yield is maximized at some $\theta^*>\frac{1}{2}-\frac{q}{2H}$. 
    \end{enumerate}

\end{corollary}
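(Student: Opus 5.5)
Both parts come from reading off the signs of the two brackets in Lemma \ref{lem:Y'}. We write
\[
\frac{1}{H}\YY'(\theta)=B_1(\theta)-D(\theta)\,B_2(\theta),
\]
where
\[
B_1=\frac{H(1-2\theta)-q}{c}+\frac{H(1-2\theta)cu_1^2u_2^2}{\det(A)},\qquad D=(d+q)u_1^2-du_2^2,
\]
and
\[
B_2=\frac{1}{cu_1u_2}+\frac{H(\theta u_1+(1-\theta)u_2)}{\det(A)}.
\]
Because $\det(A)>0$ (shown in the proof of Lemma \ref{lem:M'_hetero}), $u_1,u_2>0$ and $\theta\in[0,1]$, we have $B_2>0$ for every $\theta$. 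The sign of $D$ is the sign of $t-\sqrt{d/(d+q)}$ with $t=u_1/u_2$. The proof of Theorem \ref{theorem:M'} (equivalently Corollary \ref{cor:M'}) pins this sign down uniformly in $\theta$: $D>0$ when $r>r_M$ and $D<0$ when $r<r_m$.

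\textbf{Part (a).} Assume $r>r_M$, so $D>0$ and $-DB_2<0$. If $\theta\ge 1/2$, then $H(1-2\theta)\le 0$ and $q>0$. Hence both terms of $B_1$ are nonpositive and the first is strictly negative, so $B_1<0$. It follows that $\YY'(\theta)<0$ on $[1/2,1]$. Therefore the maximum of the continuous function $\YY$ on $[0,1]$ cannot lie in $(1/2,1]$. It also cannot be at $\theta=1/2$ itself, since $\YY$ is strictly decreasing immediately to its left. So it is attained at some $\theta^*<1/2$.

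\textbf{Part (b).} Assume $r<r_m$, so $D<0$ and $-DB_2>0$. If $H(1-2\theta)\ge q$, then the first term of $B_1$ is $\ge 0$. Moreover $1-2\theta\ge q/H>0$, so the second term is strictly positive. Hence $B_1>0$ and $\YY'(\theta)>0$ for all $\theta\le \frac12-\frac{q}{2H}$. So $\YY$ is strictly increasing on that interval, if it is nonempty. The maximizer therefore satisfies $\theta^*>\frac12-\frac{q}{2H}$. If $\frac12-\frac{q}{2H}<0$ the claim is trivially true.

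\textbf{Potential gap.} The only subtle point is that the sign of $D$ holds for every $\theta\in[0,1]$, not just at one value. This is exactly what the monotonicity of $g(\theta)$ in the proof of Theorem \ref{theorem:M'} provides, so no new estimate is needed. The rest is sign bookkeeping, using the persistence assumption of Lemma \ref{lem:persist} so that $u_1,u_2>0$ and $\YY$ is differentiable.
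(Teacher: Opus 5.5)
Your proposal is correct and follows the paper's approach. The paper only says the corollary "follows directly from Lemma~\ref{lem:Y'}", and you supply the intended sign bookkeeping: $\det(A)>0$ makes the second bracket positive, the proof of Theorem~\ref{theorem:M'} fixes the sign of $(d+q)u_1^2-du_2^2$ uniformly in $\theta$, and the constraint on $\theta$ fixes the sign of the first bracket. This is the same decomposition the paper uses in proving Theorem~\ref{thm:max_Y_downstream}.
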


As hinted in Corollary \ref{cor:max_Y}, the yield can follow many different patterns when the condition in Theorem \ref{thm:max_Y_downstream} is not satisfied. \textbf{Figure 2} presents numerical simulations illustrating these. In Figure \ref{fig:left_figure_Y}, the curves $r_m$ (blue) and $r_M$ (orange) partition the $(q/H,r)$-plane and delimit where the yield-maximizing split $\theta^\ast$ (fraction of budget upstream) may occur. In the green strip, the sufficient condition $q\ge 2H$ together with large $r$ ensures that the maximum yield is achieved by harvesting only downstream ($\theta^\ast=0$). Below $r_m$ and $q<H$, the optimal harvesting strategy cannot be to harvest all downstream from case $(b)$ in Corollary~\ref{cor:max_Y}. Below $r_m$ and $q>H$ (lower tan region), the condition $\theta>\frac{1}{2}-\frac{q}{2H}$ in Corollary~\ref{cor:max_Y} is reduced to $\theta>0$, thus the corollary provides no information regarding best harvesting strategy. Figure~\ref{fig:right_Y} confirms these behaviors at $q/H=0.75$ (with $d=1$, $H=4$, $c=1$, and $q=3$) for four choices of $r$: for $r=1$, the yield peaks when harvesting all upstream $\theta= 1$; for $r=3$, the maximizer is interior around $\theta\approx 0.66$; for $r=11$ and $r=15$, the maximizers occur near $\theta= 0.06$ an $\theta = 0.08$, respectively. This also indicates that the optimum harvesting strategy for yield may change non-monotonously as $r$ increases, further demonstrating the complexity of optimizing yield in a stream.

\begin{figure}
\begin{subfigure}[t]{0.48\textwidth}
        \centering
\includegraphics[width=1\linewidth]{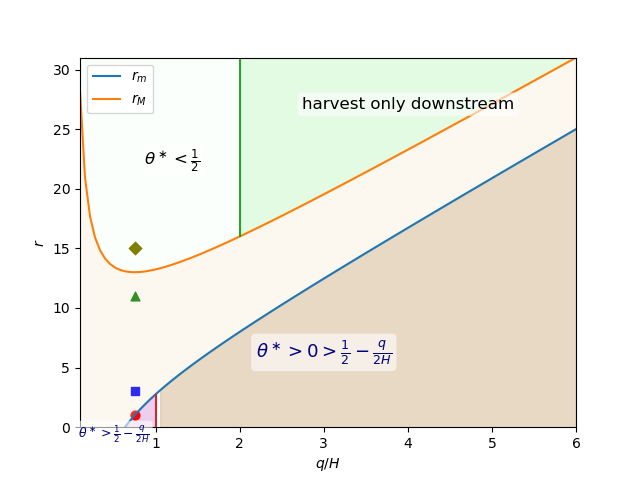}
        \caption{Illustration of an instance of Theorem~\ref{thm:max_Y_downstream} and Corollary~\ref{cor:max_Y}}
        \label{fig:left_figure_Y}
    \end{subfigure}
\begin{subfigure}[t]{0.48\textwidth}
\vspace{-5.5cm}
    \includegraphics[width=0.48\linewidth]{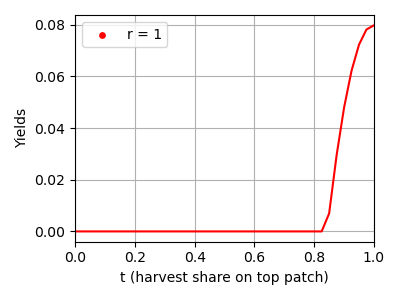}
    \includegraphics[width=0.48\linewidth]{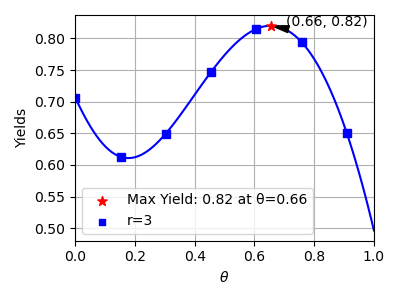}
        
        \vspace{-0.5em}

    \includegraphics[width=0.48\linewidth]{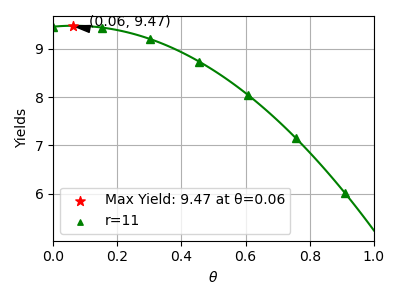}
    \includegraphics[width=0.48\linewidth]{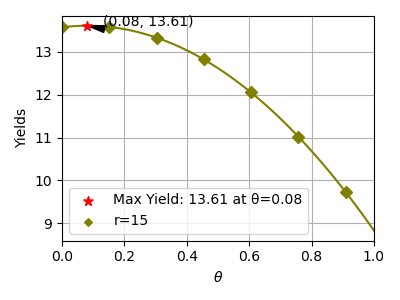}
    \caption{Yield vs harvesting}\label{fig:right_Y}
        \end{subfigure}
    \caption{In these two figures, $d = 1$ and $H=4$,  four specific points are chosen with $q/H=0.75$ and different $r$ values. Figure \ref{fig:right_Y} shows the yield against the harvest parameter $\theta$ at these four points with $c=1$ and $q=3$.}
    \label{fig:2patch_Y}
\end{figure}

\section{Constrained harvesting in $n-$patch systems}\label{sec n patch}

In this section, we generalize some of the results in Section \ref{section constrained harvest} to the $n$-patch system \eqref{n patch system}. In particular, we focus on the regime where the equilibrium values of $u_i$ are large. For convenience, we assume that the intrinsic growth rates are the same in each patch, i.e. $r_1=\dots=r_n=r$ and focus on the effects of the intraspecies competition rates and the movement network on the total biomass and the yield.

\subsection{Maximizing the total biomass}
\subsubsection{Concentrating on the patches with the highest intraspecies competition rate}

Without loss of generality, that is by renumbering patches as needed, we may assume that $c_1=\dots=c_{\ell}=c>c_i$ for $i=\ell+1,\dots,n$, i.e. the first $\ell$ patches have the strongest intraspecies competition rate.

We first show that to maximize the remaining biomass, it is better to concentrate the harvesting effort in patch $1,\dots,\ell$. To see this, let the harvesting effort in each patch be given as follows:
\begin{align}\label{hi}
&h_i=\begin{cases}
   \beta_i(1-\theta)H \quad &\text{for} \quad i=1,\dots,\ell\\
    \alpha_i\theta H \quad &\text{for} \quad i=\ell+1,\dots,n
\end{cases}
\end{align}
where $\sum_{i=1}^\ell \beta_i=1$, $\sum_{i=\ell+1}^n \alpha_i=1$, and $\theta\in[0,1]$. We now show that for any choice of $\bm\alpha \in \mathbb{R}^{n-\ell}$ and $\bm\beta\in\mathbb{R}^\ell$, we must have $\lim_{r\to\infty}\MM'<0$, where the differentiation is with respect to $\theta$. 

We start with some necessary calculations. For $i=\ell+1,\dots,n$, we have 
\begin{subequations}
\begin{align}
 &u_i(r-\alpha_i\theta H-c_iu_i)  + \sum_j (a_{ij} u_j - a_{ji} u_i)=0. \label{eq:u_alpha}\\
 &u_i'(r-\alpha_i\theta H- c_iu_i)-u_i(\alpha_i H+ c_iu_i') + \sum_j (a_{ij} u_j' - a_{ji} u_i')=0,\label{eq:u'_alpha}
\end{align}    
\end{subequations}
where the differentiation is with respect to $\theta$. For $i=1,\dots,\ell$, we have 
\begin{subequations}
\begin{align}
 &u_i(r-\beta_i(1-\theta) H-c_iu_i)  + \sum_j (a_{ij} u_j - a_{ji} u_i)=0. \label{eq:u_beta}\\
 &u_i'(r-\beta_i(1-\theta) H- c_iu_i)+u_i(\beta_i H- c_iu_i') + \sum_j (a_{ij} u_j' - a_{ji} u_i')=0,\label{eq:u'_beta}
\end{align}    
Taking $u_i\times\eqref{eq:u'_alpha} - u_i'\times\eqref{eq:u_alpha}$ yields
\begin{equation}\label{eq:u_alpha_2}
    -u_i^2(\alpha_i H+c_iu_i')+\sum_ja_{ij}(u_j'u_i-u_ju_i')=0 \quad \text{for} \quad i=\ell+1,\dots,n.
\end{equation}
Similarly, we have
\begin{equation}\label{eq:u_beta_2}
    -u_{i}^2(-\beta_iH+c_{i}u_{i}')+\sum_ja_{ij}(u_j'u_{i}-u_ju_{i}')=0 \quad \text{for} \quad i=1,\dots,\ell.
\end{equation}
\end{subequations}
We apply these calculations in the following auxillary results. 

\begin{lemma}\label{lemma:u_alpha}
We have
    \[
    \lim_{r\to\infty} \frac{u_i}{r} = \frac{1}{c_i}.
    \]
    Furthermore the convergence is uniform in $\bm\alpha\in [0, 1]^{n-\ell}$, $\bm\beta\in[0,1]^{\ell}$ and $\theta\in [0, 1]$.
\end{lemma}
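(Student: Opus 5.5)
We prove the limit with two-sided bounds on the equilibrium, via comparison applied to the rescaled variables $v_i = u_i/r$. Write $h_i\in[0,H]$ for the harvesting effort in patch $i$; by \eqref{hi} each $h_i$ is one of $\alpha_i\theta H$ or $\beta_i(1-\theta)H$, so $0\le h_i\le H$ uniformly in $\bm\alpha,\bm\beta,\theta$. Set $D:=\max_i\sum_j a_{ji}$, the largest total emigration rate. Dividing the equilibrium equations \eqref{eq:u_alpha}, \eqref{eq:u_beta} by $r^2$ gives
\[
v_i\Big(1-\frac{h_i}{r}-c_iv_i\Big)+\frac1r\sum_j\big(a_{ij}v_j-a_{ji}v_i\big)=0,\qquad i=1,\dots,n.
\]
Everything besides the logistic term is $O(1/r)$ as long as the $v_j$ stay bounded.

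\textbf{Upper bound.} Take an index $k$ maximizing $c_kv_k$. We may assume $c_kv_k>1$, since otherwise $v_i\le 1/c_i$ for all $i$ and there is nothing to prove. Using $h_k\ge0$ and $a_{kj}v_j\le a_{kj}(c_kv_k)/c_j$ in the $k$-th equation yields
\[
c_kv_k-1\le \frac{1}{r}\sum_j a_{kj}\frac{c_k}{c_j}\le \frac{C}{r},
\]
where $C$ depends only on $A$ and the $c_i$. Hence $\max_i c_iv_i\le 1+C/r$.

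\textbf{Lower bound.} Take an index $m$ minimizing $c_mv_m$. Here $v_m>0$ because $\bm u^*$ is the positive equilibrium. In the $m$-th equation, drop the nonnegative immigration terms and use $h_m\le H$ and $\sum_j a_{jm}\le D$:
\[
c_mv_m\ge 1-\frac{H+D}{r}.
\]
Therefore $1-(H+D)/r\le c_iv_i\le 1+C/r$ for every $i$. Neither bound depends on $\bm\alpha$, $\bm\beta$ or $\theta$, so $u_i/r\to 1/c_i$ uniformly, which is exactly the uniformity claimed in the statement.

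\textbf{Main obstacle.} The only delicate point is that the positive equilibrium must exist for all large $r$, uniformly in the parameters. For $r>H+D$, the constant vector $\underline{u}_i=\epsilon$ is a strict subsolution (its growth term is at least $\epsilon(r-H-D-c_i\epsilon)>0$), so $s(J)>0$ and the positive equilibrium exists by the dichotomy stated in Section~\ref{section model}. Its uniqueness and differentiability in $\theta$, which the subsequent computations use, come from the same global stability statement.
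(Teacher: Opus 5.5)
Your proof is correct, and it takes a genuinely different route from the paper's.

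\textbf{The paper's route.} It sets $\kappa=1/r$ and $y_i=u_i/r$, and notes that the rescaled map satisfies $\bm f(0,\theta,\bm\alpha,\bm\beta,\bm y_0)=\bm 0$ with $D_{\bm y}\bm f=-\bm I$ at $\bm y_0=(1/c_1,\dots,1/c_n)$. It applies the implicit function theorem at each parameter point and uses compactness of $[0,1]\times[0,1]^{n-\ell}\times[0,1]^\ell$ to patch the local branches into a continuous positive branch $\bm g(\kappa,\theta,\bm\alpha,\bm\beta)$ on $[0,\delta_m]$. Finally, it identifies $\bm g$ with $\bm u^*/r$ through uniqueness of the positive equilibrium.

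\textbf{Your route.} You derive two-sided a priori bounds directly from the equilibrium equations. The index maximizing $c_iv_i$ gives $c_iv_i\le 1+C/r$. Dropping immigration gives $c_iv_i\ge 1-(H+D)/r$; this lower bound holds at every index, so choosing a minimizing $m$ is unnecessary.

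\textbf{What each buys.} Your argument is more elementary and gives more:
\begin{itemize}
\item an explicit $O(1/r)$ rate;
\item uniformity over every allocation $\bm h\in[0,H]^n$, not just the particular parametrization;
\item validity for any positive equilibrium, so uniqueness is not needed for the limit itself.
\end{itemize}
It also avoids the delicate part of the compactness step. There, the implicit function theorem only gives uniqueness near $\bm y_0$, so other solutions of $\bm f=\bm 0$, such as $\bm y=\bm 0$, must be excluded separately. The price is that you must prove existence on your own, which you do correctly via $J\bm 1\ge (r-H-D)\bm 1\gg\bm 0$. The paper's construction produces a positive equilibrium for large $r$ automatically, with smooth dependence on the parameters.

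\textbf{A side remark to correct.} Your final remark, which is not needed for this lemma, is inaccurate: differentiability in $\theta$ does not follow from global stability. It follows from the implicit function theorem at $\bm u^*$. The Jacobian $J^*$ there satisfies $J^*\bm u^*=-(c_1(u_1^*)^2,\dots,c_n(u_n^*)^2)^T\ll\bm 0$. This forces $s(J^*)<0$, so $J^*$ is invertible.
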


\begin{proof}
    Let $\kappa=1/r$ and $y_i=u_i/r$ for $i=1, \dots, n$. Then, dividing both sides of \eqref{eq:u_alpha} and \eqref{eq:u_beta} by $r^2$, we obtain
    \[\bm f (\kappa, \theta, \bm\alpha, \bm\beta, \bm y) = \bf 0,\]

    where $\bm f=(f_1, \dots, f_n): \mathbb{R}\times\mathbb{R}\times \mathbb{R}^{n-\ell}\times \mathbb{R}^{\ell}\times \mathbb{R}^n\rightarrow \mathbb{R}^n$ is given by 
    \begin{align*}
        f_i(\kappa, \theta, \bm\alpha, \bm\beta, \bm y) & =y_i(1-\kappa\alpha_i\theta H-c_i y_i)+\kappa\sum_j(a_{ij}y_j-a_{ji}y_i) \quad \text{for} \quad  i=\ell+1,\dots, n,\\ f_{i}(\kappa, \theta, \bm\alpha, \bm\beta, \bm y) & =y_{i}(1-\kappa(1-\beta_i)\theta H-c_{i} y_{i})+\kappa\sum_j(a_{ij}y_j-a_{ji}y_{i}) \quad \text{for} \quad  i=1, \dots, \ell. 
    \end{align*}

     Fix $\theta_0\in[0, 1]$, $\bm \beta_0=(\beta_{01},\dots,\beta_{0\ell})\in \mathbb{R}^\ell$, and $\bm\alpha_0=(\alpha_{0(\ell+1)},\dots, \alpha_{0n})\in\mathbb{R}^{n-\ell}$. Let $\bm y_0=(y_{01},\dots, y_{0n})=(1/c_1,\dots, 1/c_n)$.  It is easy to check that 
    $$
\bm f(0, \theta_0, \bm\alpha_0, \bm\beta_0, \bm y_0)=\bm 0. 
    $$
    Then, we have 
    $$
D_{\bm y} \bm f(0, \theta_0, \bm\alpha_0, \bm\beta_0, \bm y_0)=-\bm I. 
    $$
By the implicit function theorem, there exist $\delta>0$, open sets $U\subset \mathbb{R}$, $V\subset \mathbb{R}^{n-\ell}$ and $W\subset \mathbb{R}^\ell$ with $\theta_0\in U$, $\bm\alpha_0\in V$, $\bm\beta_0\in W$, and a unique continuous function $\bm g: [0, \delta]\times U\times V\times W\to \mathbb{R}^n$ such that $\bm y=\bm g(\kappa, \theta, \bm\alpha, \bm\beta)$ satisfies 
$$
\bm f(\kappa, \theta, \bm\alpha, \bm\beta, \bm g(\kappa,\theta,\bm\alpha, \bm\beta))=\bm 0, \ \ (\kappa, \theta, \bm\alpha,\bm\beta)\in [0, \delta]\times U\times V\times W,
$$
and 
$$
\bm y_0=\bm g(0, \theta_0, \bm\alpha_0, \bm\beta_0). 
$$
By the compactness of $[0, 1]$, $[0, 1]^{n-\ell}$ and $[0, 1]^{\ell}$, there exists $\delta_m>0$ such that all the solutions of $\bm f(\kappa, \theta, \bm \alpha, \bm\beta, \bm y)=\bm 0$ for $\kappa\in [0, \delta_m]$, $\theta\in [0, 1]$, $\bm\alpha\in [0, 1]^{n-\ell}$ and $\bm\beta\in [0, 1]^\ell$ satisfy $\bm y=\bm g(\kappa, \theta, \bm\alpha,\bm\beta)$. Moreover, $\bm y_0=\bm g(0, \theta, \bm\alpha,\bm\beta)$ for $\theta\in [0, 1]$, $\bm \alpha\in [0, 1]^{n-\ell}$ and $\bm\beta\in[0, 1]^{\ell}$. Hence, we may choose $\delta_m$ small such that $\bm g$ is positive. 

Since $g$ is continuous on the compact set $[0, \delta_m]\times [0, 1]\times \mathbb{R}^{n-\ell}\times\mathbb{R}^\ell$, it is uniformly continuous.
This implies $\lim_{\kappa\to 0^+} \bm g(\kappa, \theta, \bm\alpha,\bm\beta)=\bm g(0, \theta, \bm\alpha,\bm\beta)=\bm y_0$ uniformly for $\theta\in [0, 1]$, $\bm\alpha\in [0, 1]^{n-\ell}$ and $\bm\beta\in [0,1]^
\ell$. By this and the uniqueness of the positive solution of \eqref{n patch system}, we obtain the desired result.  
\end{proof}

\begin{proposition}\label{prop:u_alpha}
We have
\begin{align*}
&\lim_{r\to\infty} u_i' = \begin{cases}
\frac{-\alpha_iH}{c_i} \quad &\text{for}\quad  i=\ell+1,\dots,n\\
\frac{\beta_i H}{c_i} \quad &\text{for}\quad  i=1,\dots,\ell.
\end{cases}    
\end{align*}
Furthermore the convergence is uniform in $\bm\alpha\in [0, 1]^{n-\ell}$, $\bm\beta\in[0,1]^{\ell}$ and $\theta\in [0, 1]$. 
\end{proposition}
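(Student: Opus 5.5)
The plan is to treat equations \eqref{eq:u_alpha_2} and \eqref{eq:u_beta_2} as a linear system for $\bm u'=(u_1',\dots,u_n')$, rescale it by the appropriate power of $r$, and pass to the limit using Lemma~\ref{lemma:u_alpha}. Dividing \eqref{eq:u_alpha_2} and \eqref{eq:u_beta_2} by $r^2$ and writing $y_i=u_i/r$, we obtain for $i=\ell+1,\dots,n$
\[
-y_i^2 c_i u_i' + \frac{1}{r}\sum_j a_{ij}(y_i u_j' - y_j u_i') = \alpha_i H y_i^2 ,
\]
and for $i=1,\dots,\ell$ the analogous equation with right-hand side $-\beta_i H y_i^2$. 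In matrix form this reads $B(r)\,\bm u' = \bm b(r)$, where
\[
B(r) = -\mathrm{diag}(c_i y_i^2) + \frac{1}{r}\, M(\bm y)
\]
and $M(\bm y)$ collects the dispersal terms.

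By Lemma~\ref{lemma:u_alpha}, $y_i\to 1/c_i$ uniformly in $(\theta,\bm\alpha,\bm\beta)$. Hence $B(r)\to -\mathrm{diag}(1/c_i)$ uniformly, and $M(\bm y)$ stays bounded since the $y_i$ are bounded. Likewise $\bm b(r)\to \bm b_\infty$ uniformly, where $(\bm b_\infty)_i=\alpha_i H/c_i^2$ for $i>\ell$ and $-\beta_i H/c_i^2$ for $i\le \ell$; this convergence is uniform because $\alpha_i,\beta_i\in[0,1]$.

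The limit matrix $B_\infty=-\mathrm{diag}(1/c_i)$ is invertible. Matrix inversion is continuous, and on a neighborhood of $B_\infty$ the inverses are uniformly bounded. Therefore, for $r$ large (uniformly in the parameters), $B(r)$ is invertible and $\bm u'=B(r)^{-1}\bm b(r)\to B_\infty^{-1}\bm b_\infty$ uniformly. Computing the limit gives $u_i'\to -c_i\cdot \alpha_iH/c_i^2=-\alpha_iH/c_i$ for $i>\ell$ and $u_i'\to \beta_iH/c_i$ for $i\le\ell$, which is the claim. To make uniformity explicit, I would bound
\[
\|B(r)^{-1}\bm b(r)-B_\infty^{-1}\bm b_\infty\| \le \|B(r)^{-1}\|\,\|\bm b(r)-\bm b_\infty\| + \|B(r)^{-1}-B_\infty^{-1}\|\,\|\bm b_\infty\| .
\]

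The main point needing care is the justification that $\bm u'$ exists and solves this linear system, that is, that the equilibrium is differentiable in $\theta$. For large $r$ this follows from the implicit function theorem, since the Jacobian of the equilibrium equations with respect to $\bm u$ is nonsingular there; this is essentially the same nondegeneracy as $D_{\bm y}\bm f=-\bm I$ in the proof of Lemma~\ref{lemma:u_alpha}. Alternatively, it follows because $\bm u^*$ is a stable hyperbolic equilibrium. Once that is in place, the uniform control of $\|B(r)^{-1}\|$ is the only remaining step, and it is routine.
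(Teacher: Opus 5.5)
Your proposal is correct and is essentially the paper's argument. The paper also writes \eqref{eq:u_alpha_2}--\eqref{eq:u_beta_2} as a linear system $B\bm u'=\bm R$; the only difference is that it divides row $i$ by $u_i^2$ rather than by $r^2$, so its limit matrix is $-\mathrm{diag}(c_1,\dots,c_n)$. It then uses Lemma~\ref{lemma:u_alpha} to get $\bm u'\to-\mathrm{diag}(1/c_i)\bm R$ uniformly, and your explicit bound on $B(r)^{-1}$ and your justification that the equilibrium is differentiable in $\theta$ fill in details the paper leaves implicit.
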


\begin{proof}
By \eqref{eq:u_alpha_2}-\eqref{eq:u_beta_2}, $\bm u'=(u_1', \dots, u_n')^T$ satisfies $B\bm u'=\bm R$ with $R=(R_1, \dots, R_n)^T$, $R_i=\alpha_i H$ for $i=\ell+1,\dots,n$ and $R_{i}=-\beta_i H$ for $i=1,\dots,\ell$, and 
\begin{equation*}
B_{ij}=
\left\{
\begin{array}{lll}
\frac{a_{ij}}{u_i}, \ \ &{i\neq j},\\
-\sum_{j} a_{ij}\frac{u_j}{u_i^2}-c_i,  &i=j. 
\end{array}
\right.
\end{equation*}
By Lemma \ref{lemma:u_alpha},  we know $B\to -\text{diag}(c_1,\dots, c_n)$ as $r\to\infty$ uniformly in $\bm\alpha\in [0, 1]^{n-\ell}$, $\bm\beta\in[0,1]^{\ell}$ and $\theta\in [0, 1]$. The desired result follows from this observation. 
\end{proof}

Using the assumption that $c_1=\dots=c_\ell=c>c_i$ for all $i=\ell+1,\dots,n$, and the fact that $\sum_{i=1}^\ell\beta_i=1$ and $\sum_{i=\ell+1}^n\alpha_i=1$, we have the following corollary.

\begin{corollary}\label{cor:u_alpha}
We have
\[
\lim_{r\to\infty}\frac{1}{H}\MM'= \frac{1}{c}-\sum_{i=\ell+1}^n\frac{\alpha_i}{c_i} < \frac{1}{c}-\frac{1}{\max_{j\in[\ell+1,n]}c_j} <0.
\]    
\end{corollary}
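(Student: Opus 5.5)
Since $\MM=\sum_{i=1}^n u_i$, we have $\MM'=\sum_{i=1}^n u_i'$, where the derivative is taken with respect to $\theta$. The plan is to pass to the limit $r\to\infty$ term by term using Proposition~\ref{prop:u_alpha}, and then bound the resulting expression using the normalizations of $\bm\alpha,\bm\beta$ and the ordering of the competition rates.

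\textbf{Step 1: the limit.} By Proposition~\ref{prop:u_alpha}, which holds uniformly in $\bm\alpha,\bm\beta,\theta$, summing the finitely many limits gives
\[
\lim_{r\to\infty}\frac1H\MM'=\sum_{i=1}^{\ell}\frac{\beta_i}{c_i}-\sum_{i=\ell+1}^n\frac{\alpha_i}{c_i}.
\]
The convergence is again uniform, since only finitely many terms are summed. For $i\le \ell$ we have $c_i=c$ and $\sum_{i=1}^\ell\beta_i=1$, so the first sum equals $1/c$. This gives the equality in the statement.

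\textbf{Step 2: the bounds.} Since $\alpha_i\ge 0$ and $\sum_{i=\ell+1}^n\alpha_i=1$, the second sum is a convex combination of the numbers $1/c_i$, $i=\ell+1,\dots,n$. Hence it is at least $\min_{j}1/c_j=1/\max_{j\in[\ell+1,n]}c_j$, which yields the middle inequality. For the last inequality, the assumption $c>c_j$ for every $j\ge \ell+1$ gives $c>\max_{j\in[\ell+1,n]}c_j$, so $1/c<1/\max_j c_j$ and the bound is strictly negative.

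\textbf{Main obstacle.} The only delicate point is the strictness of the middle inequality. A convex combination attains its minimum when all the weight sits on indices with $c_i=\max_j c_j$, so in that case the middle inequality is only $\le$. I would therefore state it as $\le$, or note that the strict conclusion $<0$ still holds because the final inequality is strict. The substantive content, namely uniform convergence so that the sign of $\MM'$ is eventually negative for all $\theta\in[0,1]$, is already supplied by Proposition~\ref{prop:u_alpha}.
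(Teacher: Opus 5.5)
Your proposal is correct and matches the paper's one-line argument. Like the paper, you sum the limits from Proposition~\ref{prop:u_alpha}, use $c_i=c$ and $\sum_{i\le\ell}\beta_i=1$, and bound the convex combination $\sum_{i>\ell}\alpha_i/c_i$ from below by $1/\max_{j\ge\ell+1}c_j>1/c$. Your caveat is also right: the middle inequality is in general only $\le$. Equality holds whenever all the $\alpha$-weight sits on patches attaining $\max_{j\ge\ell+1}c_j$, for instance when $\ell=n-1$. The final strict inequality $<0$ is unaffected.
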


By Corollary \ref{cor:u_alpha}, the total biomass $\MM$ is decreasing in $\theta$ when $r$ is large. This implies the best strategy to maximize $\MM$ is to concentrate the harvesting effort on patches $1,\dots,\ell$, i.e. the patches with the highest intraspecies competition rate.

\subsubsection{Harvesting effort distribution among the patches with the highest intraspecies competition rate}

The remaining question is to determine which harvesting effort distribution among the patches with the highest intraspecies competition rate yields the maximum biomass. We show that the answer depends on the structure of the movement network. In particular, we define a quantity for each node as follows.
\begin{definition}\label{def:netflow}
    For each node $i$, the \textit{effective net flow} $I_i$ is defined by
    \[
    I_i= \sum_j\bigg(\frac{a_{ij}}{c_j}-\frac{a_{ji}}{c_i} \bigg).
    \]
\end{definition}
We show that to maximize the total biomass, among the patches $i=1,\dots,\ell$, we concentrate all harvesting effort on the patch with the highest effective net flow. Without loss of generality, assume $I_1>I_i$ for all $i=2,\dots,\ell$.  To this end, let
\begin{align}\label{hi_3third}
h_1=(1-\delta)H, \quad  h_i=\gamma_i\delta H \quad \text{for} \  i=2,\dots,\ell, \quad \text{and}\quad h_i=0 \quad \text{for}\ i=\ell+1, \dots, n. 
\end{align}
where $\sum_{i=2}^\ell \gamma_i=1$ and $\delta\in[0,1]$.

The following results, Lemma \ref{lemma:u_gamma} and Proposition \ref{prop:u_gamma}, are similar to Lemma \ref{lemma:u_alpha} and Proposition \ref{prop:u_alpha} and thus their proofs are omitted.

\begin{lemma}\label{lemma:u_gamma}
We have
    \[
    \lim_{r\to\infty} \frac{u_i}{r} = \frac{1}{c_i}.
    \]
    Furthermore the convergence is uniform in $\bm\gamma\in [0, 1]^{\ell-1}$ and $\delta\in [0, 1]$.
\end{lemma}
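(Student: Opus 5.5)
The plan is to repeat the proof of Lemma \ref{lemma:u_alpha}, now with the harvesting profile \eqref{hi_3third}. Set $\kappa=1/r$ and $y_i=u_i/r$, and divide the equilibrium equations \eqref{eq-eq} (with $h_i$ from \eqref{hi_3third}) by $r^2$. This gives the system $\bm F(\kappa,\delta,\bm\gamma,\bm y)=\bm 0$, where
\[
F_i(\kappa,\delta,\bm\gamma,\bm y)=y_i\bigl(1-\kappa h_i/H\cdot H - c_i y_i\bigr)+\kappa\sum_j(a_{ij}y_j-a_{ji}y_i),
\]
with $h_1=(1-\delta)H$, $h_i=\gamma_i\delta H$ for $2\le i\le \ell$, and $h_i=0$ for $i>\ell$. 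The functions $F_i$ are polynomial, hence smooth, in all variables.

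At $\kappa=0$ the system decouples into $y_i(1-c_iy_i)=0$. It therefore has the solution $\bm y_0=(1/c_1,\dots,1/c_n)$ for every $(\delta,\bm\gamma)$, and there $D_{\bm y}\bm F=-\bm I$, which is invertible. The implicit function theorem then gives, around each point $(0,\delta_0,\bm\gamma_0)$, a neighborhood and a unique continuous branch $\bm y=\bm G(\kappa,\delta,\bm\gamma)$ with $\bm G(0,\cdot,\cdot)=\bm y_0$. Because $[0,1]\times[0,1]^{\ell-1}$ is compact, finitely many such neighborhoods cover it. This yields a single $\delta_m>0$ and a continuous $\bm G$ on $[0,\delta_m]\times[0,1]\times[0,1]^{\ell-1}$. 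Since $\bm G$ is positive at $\kappa=0$ and continuous on a compact set, shrinking $\delta_m$ makes it positive throughout.

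Uniform continuity on this compact set then gives $\bm G(\kappa,\delta,\bm\gamma)\to\bm y_0$ as $\kappa\to0^+$, uniformly in $(\delta,\bm\gamma)$. It remains to identify $r\bm G$ with the positive equilibrium $\bm u^*$. For $r$ large the persistence condition holds, because $s(J)\ge r-H-\max_i\sum_j a_{ji}>0$. So the positive equilibrium exists and is unique, and since $r\bm G$ is a positive solution of the equilibrium equations, $u_i/r=G_i\to 1/c_i$ uniformly.

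The main obstacle is this last identification step, namely ensuring that the implicit-function branch really is the positive equilibrium. Two points need care. First, the local uniqueness neighborhoods must be patched into a single branch: the local branches agree on overlaps by the uniqueness in the implicit function theorem, so they glue together. Second, positivity must hold uniformly on the compact parameter set, which the shrinking of $\delta_m$ above provides. Once both hold, global uniqueness of the positive equilibrium of \eqref{n patch system} closes the argument, exactly as in Lemma \ref{lemma:u_alpha}.
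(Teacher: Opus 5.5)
Your proposal is correct and follows the paper's route. The paper omits this proof as identical to that of Lemma~\ref{lemma:u_alpha}, which likewise rescales by $\kappa=1/r$, $y_i=u_i/r$, applies the implicit function theorem at $\kappa=0$ where $D_{\bm y}\bm f=-\bm I$, uses compactness of the parameter set to get a uniform $\kappa$-interval with positive solutions, and concludes by uniqueness of the positive equilibrium; your explicit check that $s(J)>0$ for large $r$, which guarantees that the positive equilibrium exists, is a detail the paper leaves implicit.
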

\begin{proposition}\label{prop:u_gamma}
We have
\begin{align*}
&\lim_{r\to\infty} u_i' = \begin{cases}
\frac{H}{c_i} =\frac{H}{c} \quad &\text{for}\quad  i=1\\
\frac{-\gamma_i H}{c_i}=\frac{-\gamma_i H}{c} \quad &\text{for}\quad  i=2,\dots,\ell\\
0 \quad &\text{for}\quad  i=\ell+1,\dots,n.
\end{cases}    
\end{align*}
 Furthermore the convergence is uniform in $\bm\gamma\in [0, 1]^{\ell-1}$ and $\delta\in [0, 1]$.
\end{proposition}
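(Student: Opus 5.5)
The plan is to mirror the proof of Proposition \ref{prop:u_alpha}, now with $\delta$ as the differentiation variable and the effort distribution \eqref{hi_3third}. We write the equilibrium equations
\[
u_i(r-h_i-c_iu_i)+\sum_j(a_{ij}u_j-a_{ji}u_i)=0,\qquad i=1,\dots,n,
\]
with $h_1=(1-\delta)H$, $h_i=\gamma_i\delta H$ for $2\le i\le \ell$, and $h_i=0$ for $i>\ell$. These hold for all $\delta\in[0,1]$, and the positive equilibrium is a differentiable function of $\delta$: the Jacobian of the equilibrium system, restricted to positive solutions, is nonsingular, as the invertibility of $B$ below confirms for large $r$. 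We differentiate in $\delta$; note $h_1'=-H$, $h_i'=\gamma_iH$ for $2\le i\le\ell$, and $h_i'=0$ otherwise. We then form $u_i\times(\text{differentiated equation}) - u_i'\times(\text{original equation})$, exactly as in \eqref{eq:u_alpha_2}--\eqref{eq:u_beta_2}. The terms $u_i'(r-h_i-c_iu_i)$ and the outflow terms $-a_{ji}u_i'u_i$ cancel, which leaves
\[
-u_i^2(h_i'+c_iu_i')+\sum_j a_{ij}(u_j'u_i-u_ju_i')=0 .
\]
Dividing by $u_i^2$ gives the linear system $B\bm u'=\bm R$. Here $B$ is the same matrix as in the proof of Proposition \ref{prop:u_alpha}: off-diagonal entries $a_{ij}/u_i$ and diagonal entries $-\sum_j a_{ij}u_j/u_i^2-c_i$. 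The right-hand side is $R_1=-H$, $R_i=\gamma_iH$ for $2\le i\le\ell$, and $R_i=0$ for $i>\ell$.

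Next we invoke Lemma \ref{lemma:u_gamma}. Since $u_i/r\to 1/c_i>0$ uniformly in $\bm\gamma\in[0,1]^{\ell-1}$ and $\delta\in[0,1]$, every $u_i\to\infty$ uniformly and the ratios $u_j/u_i$ stay bounded. Hence $a_{ij}/u_i\to0$ and $a_{ij}u_j/u_i^2\to0$ uniformly, so $B\to-\mathrm{diag}(c_1,\dots,c_n)$ uniformly. Matrix inversion is continuous near this invertible limit, so $B$ is invertible for $r$ large and $B^{-1}\to-\mathrm{diag}(1/c_i)$ uniformly. Since $\bm R$ is bounded independently of $r$ (because $\gamma_i\in[0,1]$), we get $\bm u'=B^{-1}\bm R\to-\mathrm{diag}(1/c_i)\bm R$ uniformly. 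That is, $u_1'\to H/c_1$, $u_i'\to-\gamma_iH/c_i$ for $2\le i\le\ell$, and $u_i'\to0$ for $i>\ell$. Finally, $c_i=c$ for $i\le\ell$ gives the stated forms.

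The main point needing care is the uniformity claim, which rests entirely on Lemma \ref{lemma:u_gamma}. That lemma's proof is the implicit-function argument of Lemma \ref{lemma:u_alpha}, now applied with parameters $(\kappa,\delta,\bm\gamma)$ over the compact set $[0,1]\times[0,1]^{\ell-1}$. I would check only that the rescaled system still has $D_{\bm y}\bm f=-\bm I$ at $\kappa=0$, which holds because the harvesting terms carry a factor $\kappa$. I would also confirm that a bound $\|B+\mathrm{diag}(c_i)\|<\min_i c_i/2$ yields a uniform bound on $\|B^{-1}\|$, so that convergence of $\bm u'$ is uniform.
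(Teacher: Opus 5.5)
Your proposal is correct and is essentially the paper's argument: the paper omits this proof as a repetition of Proposition~\ref{prop:u_alpha}. That means forming $B\bm u'=\bm R$ with $R_i=h_i'$ (here $R_1=-H$, $R_i=\gamma_iH$ for $2\le i\le\ell$, and $R_i=0$ for $i>\ell$), then using Lemma~\ref{lemma:u_gamma} to get $B\to-\mathrm{diag}(c_1,\dots,c_n)$ uniformly, exactly as you do. Your remarks on differentiability in $\delta$ (the equilibrium Jacobian equals $\mathrm{diag}(u_i)B$) and on a uniform bound for $\|B^{-1}\|$ just make explicit details the paper leaves implicit.
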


Based on Proposition \ref{prop:u_gamma} and the assumption that $c_1=\dots=c_\ell$, we have $\lim_{r\to\infty}\MM' = 0$. To determine the sign of $\MM'$ for large $r$, we compute the limit of $r\MM'$ instead. 

\begin{proposition}\label{prop:rM'}
      We have
    \begin{align*}
        \lim_{r\to\infty} r\MM' = H(-I_{1}+\sum_{i=2}^\ell\gamma_i I_i).
    \end{align*}
    Furthermore the convergence is uniform in $\bm\gamma\in [0, 1]^{\ell-1}$ and $\delta\in [0, 1]$. As a consequence, we have 
     \[
    \lim_{r\to\infty} \frac{r}{H}\MM'=-I_{1}+\sum_{i=2}^\ell\gamma_i I_i < - I_1+\max_{i\in[2,\ell]}I_i<0. 
    \]   
\end{proposition}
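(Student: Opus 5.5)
The plan is to avoid a second-order expansion of $u_i'$. Instead I will rewrite $\MM'$ exactly, so that its $O(1)$ part cancels identically and the $O(1/r)$ remainder involves only first-order information: the limits already supplied by Lemma \ref{lemma:u_gamma} and Proposition \ref{prop:u_gamma}.

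\textbf{Exact identity for each $u_i'$.} Write $h_i'$ for the $\delta$-derivatives of the efforts in \eqref{hi_3third}: $h_1'=-H$, $h_i'=\gamma_i H$ for $2\le i\le\ell$, and $h_i'=0$ for $i>\ell$. Proceeding exactly as in the derivation of \eqref{eq:u_alpha_2}--\eqref{eq:u_beta_2}, each $i$ satisfies
\[
-u_i^2\,(h_i'+c_iu_i')+\sum_j a_{ij}\,(u_j'u_i-u_ju_i')=0 .
\]
Dividing by $u_i^2c_i$ gives
\[
u_i'=\frac{-h_i'+E_i}{c_i},
\qquad
E_i:=\sum_j a_{ij}\,\frac{u_j'u_i-u_ju_i'}{u_i^2}.
\]
Summing over $i$, and using $c_1=\dots=c_\ell=c$ together with $\sum_{i=2}^\ell\gamma_i=1$, the forcing terms cancel exactly:
\[
\sum_i \frac{-h_i'}{c_i}=\frac{H}{c}-\frac{H}{c}\sum_{i=2}^\ell\gamma_i=0 .
\]
Hence $r\MM'=\sum_i rE_i/c_i$ holds exactly, with no remaining $O(1)$ term.

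\textbf{Limit of $rE_i$.} Let $v_i:=\lim_{r\to\infty}u_i'$ be the limits given by Proposition \ref{prop:u_gamma}, and recall $u_i/r\to 1/c_i$ by Lemma \ref{lemma:u_gamma}. Rewrite
\[
rE_i=\sum_j a_{ij}\,\frac{u_j'\,(u_i/r)-(u_j/r)\,u_i'}{(u_i/r)^2}.
\]
Every factor converges uniformly in $(\bm\gamma,\delta)$, and the denominators stay bounded away from $0$. Therefore
\[
rE_i\;\to\; c_i^2\sum_j a_{ij}\Big(\frac{v_j}{c_i}-\frac{v_i}{c_j}\Big)
\]
uniformly. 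Dividing by $c_i$ and summing gives
\[
\lim_{r\to\infty} r\MM'=\sum_{i,j}a_{ij}v_j-\sum_{i,j}a_{ij}\frac{c_iv_i}{c_j}.
\]
Swap $i$ and $j$ in the first sum and write $v_i=(c_iv_i)/c_i$. This turns the limit into
\[
\sum_i c_iv_i\sum_j\Big(\frac{a_{ji}}{c_i}-\frac{a_{ij}}{c_j}\Big)=-\sum_i c_iv_iI_i .
\]
By Proposition \ref{prop:u_gamma}, $c_1v_1=H$, $c_iv_i=-\gamma_iH$ for $2\le i\le\ell$, and $c_iv_i=0$ for $i>\ell$. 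The limit is therefore $H\big(-I_1+\sum_{i=2}^\ell\gamma_iI_i\big)$, as claimed.

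\textbf{Consequence and main difficulty.} The consequence is immediate. Since the $\gamma_i$ form a convex combination, $\sum_{i=2}^\ell\gamma_iI_i\le\max_{2\le i\le\ell}I_i$, and the assumption $I_1>I_i$ for $i=2,\dots,\ell$ makes the bound negative. Note that the middle inequality in the statement should read $\le$, since equality holds when $\gamma$ is concentrated on a maximizer; the final strict inequality $-I_1+\max_{2\le i\le\ell}I_i<0$ is unaffected.

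The main obstacle is the cancellation step. A naive approach would need the $1/r$-correction of each $u_i'$, which requires a second-order implicit-function expansion. The exact identity $u_i'=(-h_i'+E_i)/c_i$ sidesteps this. Two points need care:
\begin{itemize}
\item the $O(1)$ forcing terms cancel exactly, not merely asymptotically; this uses $c_1=\dots=c_\ell$ and $\sum_{i=2}^\ell\gamma_i=1$;
\item $E_i$ is already $O(1/r)$ with a limit determined by first-order data only; its uniformity follows from the uniform convergence statements in Lemma \ref{lemma:u_gamma} and Proposition \ref{prop:u_gamma}.
\end{itemize}
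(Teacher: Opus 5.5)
Your proposal is correct and follows essentially the paper's route: the same identity for each $u_i'$, exact cancellation of the $O(1)$ forcing terms using $c_1=\dots=c_\ell$ and $\sum_{i=2}^\ell\gamma_i=1$, then passage to the limit via Lemma \ref{lemma:u_gamma} and Proposition \ref{prop:u_gamma}. Your version is slightly tidier, for two reasons. First, you take the limit before re-indexing, whereas the paper's displayed formula for $r\MM'$ after swapping indices replaces $r/(c_ju_j)$ by $r/(c_iu_i)$ and so holds only asymptotically. Second, your correction of the middle inequality to $\le$ is right; for $\ell=2$ it is always an equality.
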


\begin{proof}
Using the same calculations that lead to \eqref{eq:u_alpha_2} and \eqref{eq:u_beta_2}, we can obtain the following equations
\begin{align*}
    &-u_i^2(-H+c_iu_i')+\sum_j a_{ij}(u_j'u_i-u_ju_i')=0 \quad \text{for} \quad i=1\\
    &-u_i^2(\gamma_iH+c_iu_i')+\sum_j a_{ij}(u_j'u_i-u_ju_i')=0 \quad \text{for} \quad i=2,\dots,\ell\\
    &-u_i^2c_iu_i'+\sum_j a_{ij}(u_j'u_i-u_ju_i')=0 \quad \text{for} \quad i=\ell+1,\dots,n.
\end{align*}
Multiplying each equation by $\frac{r}{c_iu_i^2}$ and rearranging terms yields
\begin{align*}
    &ru_i' - \frac{Hr}{c_i}=\frac{r}{c_iu_i} \sum_ja_{ij}(u_j'-\frac{u_j}{u_i}u_i')\quad \text{for} \quad i=1\\
    &ru_i' + \frac{\gamma_iHr}{c_i}=\frac{r}{c_iu_i} \sum_ja_{ij}(u_j'-\frac{u_j}{u_i}u_i') \quad \text{for} \quad i=2,\dots,\ell\\
    &ru_i'=\frac{r}{c_iu_i} \sum_ja_{ij}(u_j'-\frac{u_j}{u_i}u_i') \quad \text{for} \quad i=\ell+1,\dots,n.
\end{align*}
Adding all equations above and using the assumption that $c_1=\dots=c_\ell=c$ and $\sum_{i=2}^\ell \gamma_i=1$ yields
\begin{align*}
    r\MM'= \sum_i \frac{r}{c_iu_i} u_i'\sum_j \left(a_{ji} - a_{ij} \frac{u_j}{u_i}\right).
\end{align*}
Finally, taking the limit $r\to\infty$ and using Lemma \ref{lemma:u_gamma} and Proposition \ref{prop:u_gamma} gives us the desired result. 
\end{proof}

From Proposition \ref{prop:rM'}, we conclude that when $r$ is large, the total biomass is maximized when $\delta=0$, i.e. among the patches with the highest $c_i$, we concentrate all harvesting effort on the patch with the highest effective net flow.

\begin{figure}[htbp]
\centering
\begin{tikzpicture}
\begin{scope}[every node/.style={draw}, node distance= 1.5 cm]
           \node[circle] (7) at (4.5,0) {$1$};
           \node[circle] (8) at (13.5,0) {$3$};
    \node[circle] (9) at (9,-3) {$4$};
    \node[circle] (10) at (9, 0) {$2$};
      \node[circle] (11) at (9, -6) {$5$};
\end{scope}
\begin{scope}[every node/.style={fill=white},
              every edge/.style={thick}]
        
       \draw[thick] [->](7) to [bend right] node[left=5] {{\footnotesize $d+q$}} (9);
        \draw[thick] [->](9) to node[right=5] {{\footnotesize $d$}} (7);  
            \draw[thick] [->](8) to [bend left] node[right=5] {{\footnotesize $d+q$}} (9);
        \draw[thick] [->](9) to node[left=5] {{\footnotesize $d$}} (8);
        \draw[thick] [->] (10) to [bend right] node[left=3] {{\footnotesize $d+q$}} (9);
         \draw[thick] [->] (9) to [bend right] node[right=3] {{\footnotesize $d$}} (10);
        \draw[thick] [->] (9) to [bend right] node[left=3] {{\footnotesize $d+q$}} (11);
         \draw[thick] [->] (11) to [bend right] node[right=3] {{\footnotesize $d$}} (9);
\end{scope}
\end{tikzpicture}
\caption{A stream network with structure 3–1–1 (three patches on top, one in the middle, one
below).}\label{river}
\end{figure}
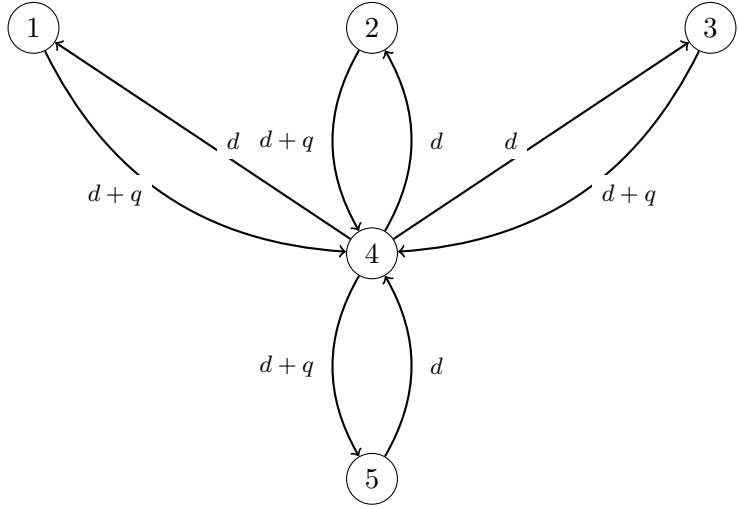

\begin{example}
Consider an $n$-patch straight stream network with homogeneous patches (i.e., $r_i=r$ and $c_i=c$ for all $i=1,\dots,n$). Assume further that the drift and diffusion are also homogeneous, i.e., the movement rate from an upstream patch to a downstream patch is $d+q$, and the movement rate from a downstream patch to an upstream patch is $d$. Then it is easy to check that the most downstream patch has the maximum effective net flow. Thus, when $r$ is large, we can maximize the (remaining) total biomass by harvesting the most downstream patch exclusively.

\end{example}

\begin{example}
The most downstream patch is not always the patch with the maximum effective net flow even if we assume homogeneous patches and homogeneous drift and diffusion. In this example, we consider the 5-patch stream network shown in Figure \ref{river}. It is straightforward to verify that patch 4 has the maximum effective net flow.
We illustrate this result numerically. In Figure~\ref{fig:311biomass}, we plot $\Delta \mathcal{M}^\ast = \mathcal{M}^\ast_{4} - \mathcal{M}^\ast_{\text{strategy}}$, where $\mathcal{M}^\ast_{4}$ is the resulting biomass when only patch 4 is harvested. Thus, values above $0$ indicate that harvesting only the middle patch (patch 4) yields more biomass than the alternative strategy, while values below $0$ indicate the alternative leaves more biomass. The solid green curve corresponds to harvesting evenly across the three top (periphery) patches $[\tfrac{1}{3},\tfrac{1}{3},\tfrac{1}{3},0,0]$; the orange dashed curve is harvesting only the upstream-left patch $[1,0,0,0,0]$; the purple dotted curve is harvesting only the bottom patch $[0,0,0,0,1]$; the red dash--dotted curve is uniform harvesting over all five patches $[\tfrac{1}{5},\tfrac{1}{5},\tfrac{1}{5},\tfrac{1}{5},\tfrac{1}{5}]$; and the black dash--dotted line at $0$ is the reference. For small $r$, the patch 1-only and patch $5$-only strategies (orange, purple) yield slightly \emph{more} biomass than  patch $4$ (negative $\Delta \mathcal{M}^\ast$), but as $r$ increases they cross the reference near $r\approx25$ and thereafter leave less biomass than harvesting only patch 4.

 \begin{figure}
    \centering
    \includegraphics[width=0.8\linewidth]{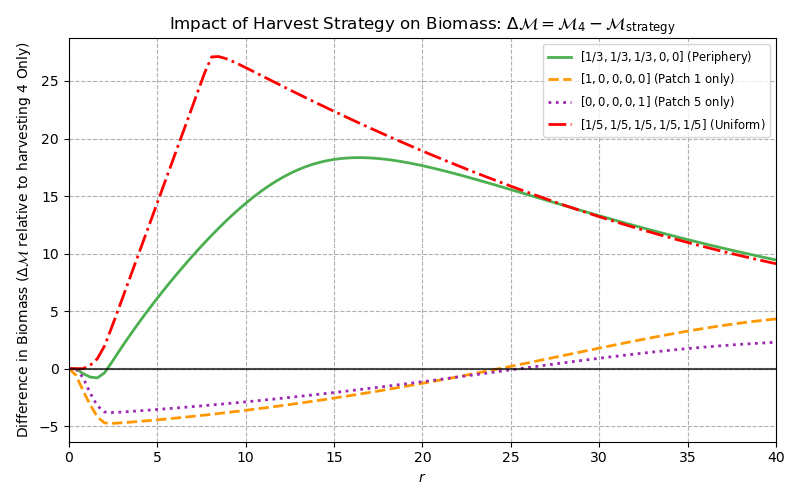}
    \caption{For a stream network with structure 3--1--1 (three patches on top, one in the middle, one below), the figure compares stabilized total biomass across five harvesting strategies as the intrinsic growth rate $r$ increases. }

    \label{fig:311biomass}
\end{figure}

\end{example}

\subsection{Maximizing the yield}
\subsubsection{Concentrating on the patches with the smallest intraspecies competition rate}

Without loss of generality, we may assume that $c_1=\dots=c_{\ell}=c<c_i$ for $i=\ell+1,\dots,n$, i.e. the first $\ell$ patches have the smallest intraspecies competition rate.

We first show that to maximize the yield, it is better to concentrate the harvesting effort in patch $1,\dots,\ell$. To see this, let the harvesting effort in each patch be given as follows:
\begin{align}\label{hiY}
&h_i=\begin{cases}
    \beta_i(1-\theta)H \quad &\text{for} \quad i=1,\dots,\ell\\
    \alpha_i\theta H \quad &\text{for} \quad i=\ell+1,\dots,n
\end{cases}
\end{align}
where $\sum_{i=1}^\ell \beta_i=1$, $\sum_{i=\ell+1}^n \alpha_i=1$, and $\theta\in[0,1]$. We now show that for any choice of $\bm\alpha \in \mathbb{R}^{n-\ell}$ and $\bm\beta\in\mathbb{R}^\ell$, we must have $\lim_{r\to\infty}\YY'<0$, where the differentiation is with respect to $\theta$. Using Lemma \ref{lemma:u_alpha} and Proposition \ref{prop:u_alpha} we obtain the following corollary.

\begin{corollary}\label{cor:YY'}
    We have 
    \[
    \lim_{r\to\infty}\frac{\YY'}{rH} =-\frac{1}{c}+\sum_{i=\ell+1}^n \frac{\alpha_i}{c_i} <-\frac{1}{c}+\frac{1}{\min_{j\in[\ell+1,n]}c_j} <0.
    \]
Furthermore the convergence is uniform in $\bm\alpha\in [0, 1]^{n-\ell}$, $\bm\beta\in[0,1]^{\ell}$, and $\theta\in [0, 1]$.
\end{corollary}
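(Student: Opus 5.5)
Differentiate the yield directly. With the harvesting efforts \eqref{hiY}, the yield is
\[
\YY=\sum_{i=1}^{\ell}\beta_i(1-\theta)Hu_i+\sum_{i=\ell+1}^n\alpha_i\theta Hu_i .
\]
Differentiating in $\theta$ gives
\[
\frac{\YY'}{H}=-\sum_{i=1}^{\ell}\beta_i u_i+\sum_{i=\ell+1}^n\alpha_i u_i+(1-\theta)\sum_{i=1}^{\ell}\beta_i u_i'+\theta\sum_{i=\ell+1}^n\alpha_i u_i'.
\]
Dividing by $r$, we will pass to the limit $r\to\infty$ term by term.

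For the first two sums we invoke Lemma~\ref{lemma:u_alpha}: $u_i/r\to 1/c_i$ uniformly in $\bm\alpha,\bm\beta,\theta$. Since $c_i=c$ for $i\le\ell$ and $\sum_{i\le \ell}\beta_i=1$, this gives
\[
\frac{1}{r}\Big(-\sum_{i\le\ell}\beta_iu_i+\sum_{i>\ell}\alpha_iu_i\Big)\to -\frac1c+\sum_{i>\ell}\frac{\alpha_i}{c_i}.
\]
For the derivative terms we use Proposition~\ref{prop:u_alpha}. Its derivation does not use any ordering of the $c_i$; it only uses the form \eqref{hiY}, which is identical to \eqref{hi}. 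So $u_i'$ converges to the finite limits $\beta_iH/c_i$ and $-\alpha_iH/c_i$, uniformly, and $u_i'$ is therefore bounded for large $r$. Since the coefficients $\theta,1-\theta,\alpha_i,\beta_i$ lie in $[0,1]$, the derivative terms divided by $r$ tend to $0$ uniformly. Together these give the stated limit, and uniformity holds because each ingredient converges uniformly over the compact parameter set.

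For the inequalities, note that $c_i>c$ for $i>\ell$. Using $\sum_{i>\ell}\alpha_i=1$,
\[
\sum_{i>\ell}\frac{\alpha_i}{c_i}\le \frac{1}{\min_{j>\ell}c_j}<\frac1c .
\]
The first inequality in the statement should be read as $\le$, with equality when the $\alpha$-weight sits on the patch with the smallest $c_j$. The final strict inequality holds because $\min_{j>\ell}c_j>c$.

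The main point needing care is the reuse of Proposition~\ref{prop:u_alpha} in the new ordering of the $c_i$. I would check that its proof, through the matrix $B\to-\mathrm{diag}(c_i)$, relies only on Lemma~\ref{lemma:u_alpha} and not on the relative sizes of the $c_i$. After that, the only remaining step is the boundedness of $u_i'$, which is what makes the derivative terms vanish after dividing by $r$.
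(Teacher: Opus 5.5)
Your argument is correct and is essentially the paper's proof. You differentiate $\YY$ under \eqref{hiY} and divide by $r$, use Lemma~\ref{lemma:u_alpha} for the $u_i/r$ terms, and use the uniform convergence (hence uniform boundedness) of $u_i'$ from Proposition~\ref{prop:u_alpha} to make the derivative terms vanish; neither result depends on the ordering of the $c_i$. Your remark that the middle inequality should be $\le$ is also right: equality holds when $\bm\alpha$ is supported on patches minimizing $c_j$, and the conclusion $<0$ is unaffected since $\min_{j>\ell}c_j>c$.
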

\begin{proof}
We have
    \[
    \YY=\sum_{i=1}^\ell \beta_i(1-\theta)Hu_i+\sum_{i=\ell+1}^n \alpha_i\theta H u_i.
    \]
Thus
\[
\frac{\YY'}{rH}=\frac{1}{r}\sum_{i=1}^\ell (u_i'\beta_i(1-\theta)-u_i\beta_i)+\sum_{i=\ell+1}^n (u_i'\alpha_i \theta + u_i\alpha_i).
\]
Using Lemma \ref{lemma:u_alpha}, Proposition \ref{prop:u_alpha}, and the assumption that $c_1=\dots=c_\ell=c<c_i$ for $i=\ell+1,\dots,n$, we have 
\[
\lim_{r\to\infty} \frac{\YY'}{rH} = -\sum_{i=1}^\ell \frac{\beta_i}{c_i}+\sum_{i=\ell+1}^n\frac{\alpha_i}{c_i} =-\frac{1}{c}+\sum_{i=\ell+1}^n \frac{\alpha_i}{c_i} <-\frac{1}{c}+\frac{1}{\min_{j\in[\ell+1,n]}c_j} <0.
\]
\end{proof}

By Corollary \ref{cor:YY'}, the yield $\YY$ is decreasing in $\theta$ when $r$ is large. This implies the best strategy to maximize $\YY$ is to concentrate the harvesting effort on patches $1,\dots,\ell$, i.e. the patches with the smallest intraspecies competition rate.

\subsubsection{Harvesting effort distribution among the patches with the smallest intraspecies competition rate}

We now determine which harvesting effort distribution among the patches with the smallest intraspecies competition rate yields the maximum yield. We show that the answer depends on the structure of the movement network. 
 In particular, we show that, under some technical conditions, we can maximize the yield by concentrating the harvesting effort on the patch with the highest effective net flow. Without loss of generality, assume $I_1>I_i$ for all $i=2,\dots,\ell$. To this end, let
\begin{align}\label{hi_3again}
h_1=(1-\delta)H, \quad  h_i=\gamma_i\delta H \quad \text{for} \  i=2,\dots,\ell, \quad \text{and} \quad h_i=0\quad \text{for}\ i=\ell+1, \dots, n,
\end{align}
where $\sum_{i=2}^\ell \gamma_i=1$ and $\delta\in[0,1]$.

It is easy to verify that Lemma \ref{lemma:u_gamma} and Proposition \ref{prop:u_gamma} still hold.

\begin{proposition}\label{prop:Y'}
    We have
    \[
    \YY'=\frac{2H}{c}(1-\delta-\delta \sum_{i=2}^\ell\gamma_i^2) -I_1+\sum_{i=2}^\ell \gamma_iI_i.
    \]
\end{proposition}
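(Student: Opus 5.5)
The identity in Proposition~\ref{prop:Y'} should be read as the limit of $\YY'/H$ as $r\to\infty$, uniformly in $\bm\gamma\in[0,1]^{\ell-1}$ and $\delta\in[0,1]$, in the same spirit as Proposition~\ref{prop:rM'}. The plan is to differentiate the yield directly and to expand $u_i$ to second order in $r$.

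\textbf{Step 1: differentiate the yield.} Under \eqref{hi_3again} we have $\YY=(1-\delta)Hu_1+\sum_{i=2}^\ell\gamma_i\delta Hu_i$. Differentiating in $\delta$ gives
\[
\frac{1}{H}\YY'=\Big(-u_1+\sum_{i=2}^\ell\gamma_iu_i\Big)+\Big((1-\delta)u_1'+\delta\sum_{i=2}^\ell\gamma_iu_i'\Big).
\]
The second bracket is handled by Proposition~\ref{prop:u_gamma}. Its limit is
\[
(1-\delta)\frac{H}{c}-\delta\sum_{i=2}^\ell\gamma_i^2\frac{H}{c}.
\]
The first bracket is of the form $r(-1/c+\sum\gamma_i/c)+o(r)$. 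Since $c_1=\dots=c_\ell=c$ and $\sum_{i\ge2}\gamma_i=1$, the $O(r)$ part cancels. Lemma~\ref{lemma:u_gamma} is therefore too coarse, and we need the $O(1)$ correction to $u_i$.

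\textbf{Step 2: second-order expansion of $u_i$.} Write $u_i=r/c_i+z_i$ and divide the equilibrium equation for patch $i$ by $u_i$:
\[
c_iu_i=r-h_i+\sum_j a_{ij}\frac{u_j}{u_i}-\sum_j a_{ji}.
\]
By Lemma~\ref{lemma:u_gamma}, $u_j/u_i\to c_i/c_j$ uniformly. Hence
\[
c_iz_i\to -h_i+\sum_j\Big(a_{ij}\frac{c_i}{c_j}-a_{ji}\Big)=-h_i+c_iI_i,
\]
that is, $z_i\to -h_i/c_i+I_i$ uniformly in $\bm\gamma,\delta$. This is where the effective net flow of Definition~\ref{def:netflow} enters.

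\textbf{Step 3: assemble the limit.} Substituting the expansion with $h_1=(1-\delta)H$, $h_i=\gamma_i\delta H$ and $c_i=c$ for $i\le\ell$, the first bracket tends to
\[
\frac{(1-\delta)H}{c}-I_1+\sum_{i=2}^\ell\gamma_i\Big(-\frac{\gamma_i\delta H}{c}+I_i\Big).
\]
Adding the limit of the second bracket from Step~1 doubles the harvest terms. This yields
\[
\frac{2H}{c}\Big(1-\delta-\delta\sum_{i=2}^\ell\gamma_i^2\Big)-I_1+\sum_{i=2}^\ell\gamma_iI_i,
\]
which is the claimed expression.

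\textbf{Main obstacle: uniformity.} The key point is to make the $O(1)$ expansion of Step~2 uniform in $(\bm\gamma,\delta)$. I would redo the implicit-function argument of Lemma~\ref{lemma:u_alpha} with $\kappa=1/r$, now showing that $\bm g$ is $C^1$ in $\kappa$ at $\kappa=0$. This follows because $\bm f$ is smooth and $D_{\bm y}\bm f=-\bm I$ there. Then $\bm y=\bm y_0+\kappa\,\partial_\kappa\bm g(0,\cdot)+o(\kappa)$ uniformly on the compact parameter set. Finally, $r(y_i-1/c_i)\to\partial_\kappa g_i(0)$, which can be computed from $\partial_\kappa\bm f$ and coincides with the limit of $z_i$ obtained in Step~2.
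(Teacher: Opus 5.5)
Your proof is correct and follows essentially the same route as the paper. You differentiate $\YY$, use Proposition~\ref{prop:u_gamma} for the $u_i'$ terms, and recover the $O(1)$ part of $-u_1+\sum_{i\ge 2}\gamma_i u_i$ by dividing the equilibrium equations by $u_i$ and letting $r\to\infty$ with Lemma~\ref{lemma:u_gamma}. The only differences are minor: the paper works with the differences $u_i-u_1$ rather than the individual corrections $z_i=u_i-r/c_i$, and your extra $C^1$ implicit-function argument is unnecessary, because the exact identity $c_iz_i=-h_i+\sum_j a_{ij}u_j/u_i-\sum_j a_{ji}$ together with the uniform convergence in Lemma~\ref{lemma:u_gamma} already gives the uniformity.
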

\begin{proof}
 We have  
     \[
    \YY=(1-\delta)Hu_1 +\sum_{i=2}^\ell \gamma_i\delta Hu_i.
    \]
Thus
\begin{align}\label{eq:Y'H}
\frac{\YY'}{H}=(1-\delta)u_1'+\delta\sum_{i=2}^\ell \gamma_iu_i'-u_1+\sum_{i=2}^\ell \gamma_iu_i = (1-\delta)u_1'+\delta\sum_{i=2}^\ell \gamma_iu_i' + \sum_{i=2}^\ell \gamma_i(u_i-u_1).
\end{align}
By dividing each equation at equilibrium by the corresponding $u_i$, we obtain
\begin{align}
    &r-(1-\delta)H-cu_1+\frac{1}{u_1}\sum_j (a_{1j}u_j-a_{j1}u_1)=0\label{eq:u_1_yy}\\
    &r-\gamma_i\delta H-cu_i+\frac{1}{u_i}\sum_j (a_{ij}u_j-a_{ji}u_i)=0, \ i=2, \dots, \ell.\label{eq:u_i_yy}
\end{align}
Taking the difference \eqref{eq:u_1_yy} $-$ \eqref{eq:u_i_yy} yields
\begin{align}
    c(u_i-u_1)=(1-\delta-\gamma_i\delta)H+\frac{1}{u_i}\sum_j (a_{ij}u_j-a_{ji}u_i) - \frac{1}{u_1}\sum_j (a_{1j}u_j-a_{j1}u_1),
\end{align}
for $i=2,\dots, \ell$.
Taking the limit $r\to\infty$ and using Lemma \ref{lemma:u_gamma}  yields
\begin{align}\label{eq:ui-u1}
\lim_{r\to\infty} (u_i-u_1)=\frac{(1-\delta-\gamma_i\delta)H}{c}+I_i-I_1.
\end{align}
Combining equations \eqref{eq:Y'H}, \eqref{eq:ui-u1}, and the results in Proposition \ref{prop:u_gamma}, we have 
\[
\lim_{r\to\infty} \frac{\YY'}{H}= \frac{2H}{c}(1-\delta-\delta \sum_{i=2}^\ell\gamma_i^2) -I_1+\sum_{i=2}^\ell \gamma_iI_i.
\]
\end{proof}
\begin{remark}\normalfont
Proposition \ref{prop:Y'} allows us to obtain generalizations of the results in Section \ref{sec:yield}. Firstly, if the movement rates are large enough compare to the total harvesting effort $H$, then we can maximize the yield by concentrating the harvesting effort on the patch with the maximum effective net flow (among the patches with the smallest instraspecies competition rates). 
\end{remark} 

The corollary below follows directly from Proposition \ref{prop:Y'}.

\begin{corollary}\label{cor:maxY}
    Suppose $I_1-\max_{i\in[2,\ell]} I_i > \frac{2H}{c}$, then
    \[
    \lim_{r\to\infty} \frac{\YY'}{H} < \frac{2H}{c} - (I_1-\max_{i\in[2,\ell]} I_i ) <0.
    \]
    Thus $\YY(\delta)$ is maximized when $\delta=0$, i.e. all harvesting effort is concentrated in patch 1, which is the patch with the highest effective net flow.
\end{corollary}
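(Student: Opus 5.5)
The plan is to read Proposition~\ref{prop:Y'} as a statement about $\lim_{r\to\infty}\YY'/H$, which is what its proof actually establishes, and to bound that limit uniformly in $\delta\in[0,1]$ and $\bm\gamma$ in the simplex $\{\gamma_i\ge 0,\ \sum_{i=2}^\ell\gamma_i=1\}$.

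\textbf{Step 1: bound the two pieces separately.} The limit is
\[
\frac{2H}{c}\Bigl(1-\delta-\delta\sum_{i=2}^\ell\gamma_i^2\Bigr) \;-\; I_1+\sum_{i=2}^\ell\gamma_i I_i .
\]
\begin{itemize}
\item For the first term, since $\gamma_i\ge 0$ we have $\delta\sum_i\gamma_i^2\ge 0$. Hence $1-\delta-\delta\sum_i\gamma_i^2\le 1$, and the term is at most $2H/c$.
\item For the network term, $\sum_i\gamma_i=1$ makes $\sum_{i=2}^\ell\gamma_iI_i$ a convex combination. It is therefore at most $\max_{i\in[2,\ell]}I_i$.
\end{itemize}
Together these give
\[
\lim_{r\to\infty}\frac{\YY'}{H}\le \frac{2H}{c}-\Bigl(I_1-\max_{i\in[2,\ell]}I_i\Bigr).
\]
This bound is negative by the hypothesis $I_1-\max_{i\in[2,\ell]}I_i>2H/c$. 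The strict ``$<$'' in the statement is harmless, since the bound is already strictly below zero.

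\textbf{Step 2: pass from the limit to finite $r$.} I need that the convergence in Proposition~\ref{prop:Y'} is uniform in $(\delta,\bm\gamma)$. The expression \eqref{eq:Y'H} involves $u_i'$ and $u_i-u_1$.
\begin{itemize}
\item The $u_i'$ converge uniformly by Proposition~\ref{prop:u_gamma}.
\item For $u_i-u_1$, the difference identity obtained by subtracting \eqref{eq:u_i_yy} from \eqref{eq:u_1_yy} writes $c(u_i-u_1)$ as $(1-\delta-\gamma_i\delta)H$ plus the flow terms $\frac{1}{u_i}\sum_j(a_{ij}u_j-a_{ji}u_i)-\frac{1}{u_1}\sum_j(a_{1j}u_j-a_{j1}u_1)$.
\item In these flow terms, ratios such as $u_j/u_i$ converge to $c_i/c_j$ uniformly by Lemma~\ref{lemma:u_gamma}.
\end{itemize}
Hence there is $r_0$ such that $\YY'(\delta)<0$ for all $r>r_0$ and all $\delta\in[0,1]$, for every fixed admissible $\bm\gamma$.

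\textbf{Step 3: conclude.} For $r>r_0$, $\YY$ is strictly decreasing on $[0,1]$ along every direction $\bm\gamma$. Every admissible allocation $(h_1,\dots,h_\ell)$ among the minimal-$c$ patches has the form \eqref{hi_3again}, so the maximum is attained at $\delta=0$, i.e.\ $h_1=H$.

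The only real obstacle is the uniformity in Step~2. Both the ratios $u_j/u_i$ and the differences $u_i-u_1$ must be controlled uniformly over the compact parameter set. I would obtain this from the implicit-function-theorem construction in Lemma~\ref{lemma:u_alpha}: the map $\bm g$ is smooth, hence uniformly $C^1$ on the compact parameter set, and this makes every limit used above uniform.
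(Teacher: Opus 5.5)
Your proposal is correct and matches the paper, which reads the bound directly off Proposition~\ref{prop:Y'} via $1-\delta-\delta\sum_i\gamma_i^2\le 1$ and $\sum_i\gamma_iI_i\le\max_{i\in[2,\ell]}I_i$. Your Steps 2--3 add the limit-to-finite-$r$ passage that the paper leaves implicit; the uniformity in $\bm\gamma$ stated in Lemma~\ref{lemma:u_gamma} and Proposition~\ref{prop:u_gamma} makes $r_0$ independent of $\bm\gamma$, which Step 3 needs in order to compare all allocations at a single $r$. Your non-strict ``$\le$'' is actually the correct middle inequality, since equality holds at $\delta=0$ with $\bm\gamma$ concentrated on a maximizer of $I_i$ over $i\in[2,\ell]$, but the conclusion $<0$ is unaffected.
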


\begin{example}
    For $n$-patch straight stream network with homogeneous patches and homogeneous flow, the effective net flow of patch 1 is $-q/c$, the effective net flow of patch $n$ is $q/c$, and the rest of the patches have 0 effective net flow. So the condition in Corollary \ref{cor:maxY} becomes $q>2H$, which is similar to the condition in Theorem \ref{thm:max_Y_downstream}.
\end{example}

Even when the condition in Corollary \ref{cor:maxY} is not fulfilled, we can still obtain some partial results similar to Corollary \ref{cor:max_Y}. It is easy to see that  Corollary \ref{cor:max_Y} is a special case of Corollary \ref{cor:H/n} when $n=2$.

\begin{corollary}\label{cor:H/n}
Suppose that $\delta>\frac{n-1}{n}$, then  $\lim_{r\to\infty} \frac{\YY'}{H}<0$. Thus the yield $\YY$ is maximized at some $\delta^*<\frac{n-1}{n}$. In other words, to maximize the yield the harvesting effort in patch 1 (the patch with the highest effective net flow) must exceed $H/n$.   
\end{corollary}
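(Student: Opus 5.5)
The plan is to read everything off the limiting formula of Proposition~\ref{prop:Y'},
\[
\lim_{r\to\infty}\frac{\YY'}{H}=\frac{2H}{c}\Big(1-\delta-\delta \sum_{i=2}^\ell\gamma_i^2\Big) -I_1+\sum_{i=2}^\ell \gamma_iI_i .
\]
I will show that each of its two pieces is nonpositive once $\delta\ge\frac{n-1}{n}$, and that the second piece is negative, bounded away from zero uniformly. The case $\ell=1$ is trivial, since all effort is then already on patch 1, so assume $\ell\ge 2$.

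\emph{Network term.} Since $\sum_{i=2}^\ell\gamma_i=1$, $\gamma_i\ge 0$, and $I_1>I_i$ for $i=2,\dots,\ell$, we get
\[
-I_1+\sum_{i=2}^\ell\gamma_iI_i\le -\big(I_1-\max_{i\in[2,\ell]}I_i\big)=:-\eta<0 .
\]
This bound does not depend on $\delta$ or $\bm\gamma$.

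\emph{Harvest-distribution term.} By Cauchy--Schwarz applied to the $\ell-1$ weights $\gamma_2,\dots,\gamma_\ell$,
\[
\sum_{i=2}^\ell\gamma_i^2\ge\frac{1}{\ell-1}\ge\frac{1}{n-1},
\]
using $\ell\le n$. Hence
\[
1-\delta-\delta\sum_{i=2}^\ell\gamma_i^2\le 1-\delta\,\frac{n}{n-1},
\]
which is $\le 0$ whenever $\delta\ge\frac{n-1}{n}$ and strictly negative when $\delta>\frac{n-1}{n}$. Combining the two pieces gives
\[
\lim_{r\to\infty}\frac{\YY'}{H}\le-\eta<0 \quad\text{for all } \delta\in\Big[\tfrac{n-1}{n},1\Big] \text{ and all admissible } \bm\gamma,
\]
which proves the first assertion.

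\emph{Location of the maximizer.} The remaining step is to convert this limit statement into the claim about where the maximizer lies, and this is where I expect the only real care to be needed. I would invoke the uniformity in $\bm\gamma\in[0,1]^{\ell-1}$ and $\delta\in[0,1]$ furnished by Lemma~\ref{lemma:u_gamma} and Proposition~\ref{prop:u_gamma}. The limit in Proposition~\ref{prop:Y'} is built from those quantities and from the uniform limit \eqref{eq:ui-u1}, so it is uniform as well. Consequently there is an $r_0$ such that for $r>r_0$ we have $\YY'(\delta)<-\eta H/2<0$ on the whole closed interval $[\frac{n-1}{n},1]$. Thus $\YY$ is strictly decreasing there, and any maximizer $\delta^*$ over $[0,1]$ must satisfy $\delta^*<\frac{n-1}{n}$.

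Finally, translate back to harvesting efforts: $h_1=(1-\delta^*)H>H/n$. For $n=2$ this recovers Corollary~\ref{cor:max_Y}(a) with $\delta=\theta$, as claimed in the text.
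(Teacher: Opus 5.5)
Your proposal is correct and follows the paper's proof: the paper likewise uses Cauchy--Schwarz to get $\sum_{i=2}^\ell\gamma_i^2\ge\frac{1}{n-1}$, which bounds the first term of Proposition~\ref{prop:Y'} by $\frac{2H}{c}\bigl(1-\delta\frac{n}{n-1}\bigr)$, and uses that $-I_1+\sum_{i=2}^\ell\gamma_iI_i<0$. Your extra step goes beyond what the paper writes. Using uniformity in $(\delta,\bm\gamma)$, you get $\YY'<0$ on the whole closed interval $[\frac{n-1}{n},1]$ for all sufficiently large finite $r$, which justifies the strict conclusion $\delta^*<\frac{n-1}{n}$ that the paper asserts with a bare ``Thus''.
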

\begin{proof}
    We have the inequality
    \[
    \sum_{i=2}^\ell \gamma_i^2 \geq \frac{(\sum_{i=2}^\ell\gamma_i)^2}{n-1} = \frac{1}{n-1}.
 \]
 Thus from Proposition \ref{prop:Y'}, if $\delta>(n-1)/n$, we have
 \begin{eqnarray*}
   \lim_{r\to\infty} \frac{\YY'}{H} &=& \frac{2H}{c}(1-\delta-\delta
 \sum_{i=2}^\ell\gamma_i^2) -I_1+\sum_{i=2}^\ell \gamma_iI_i \\
 &<& \frac{2H}{c}(1-\delta\frac{n}{n-1})-I_1+\sum_{i=2}^\ell \gamma_iI_i <\frac{2H}{c}(1-\delta\frac{n}{n-1})<0.   
 \end{eqnarray*}
\end{proof}

\section{Discussion}\label{section discussion}

We studied how to allocate a fixed harvesting budget across patches in a stream network to optimize two objectives: maximizing the total biomass and maximizing sustainable yield in the presence of biased movement. Overall, the findings suggest that for a stream network with spatial heterogeneity, the patch parameters determine the optimal strategy whereas for homogeneous stream network, the network structure determines the optimal strategies.  

For the homogeneous two-patch model, we obtained an analytical classification for maximizing biomass. Our findings show that when resources are abundant, harvesting exclusively on the downstream patch gives the largest total biomass, this result agrees with~\cite{nguyen2023maximizing} where concentrating resources on the upstream patch yields the largest biomass. On the other hand, when resources are modest, harvesting only upstream results in maximum biomass. Given that~\cite{nguyen2023maximizing} shows concentrating resources downstream promotes the growth rate (that is favoring downstream, similar to harvest exclusively upstream here), this could indicate the importance of promoting the growth rate when the resources are modest or scarce.

In addition, our results indicate a similar strategy for maximizing yield for the two-patch stream network, but with much more complexity. We derived parameter conditions guaranteeing that harvesting exclusively downstream maximizes yield, particularly in a high-growth regime with sufficiently strong advection relative to the harvest budget. Outside this regime, the yield-maximizing strategy can occur upstream, downstream, or at an interior split, consistent with the numerical examples and highlighting the stronger sensitivity of yield to the spatial distribution of equilibrium densities.

For $n$-patch networks, our analysis focuses on the case when $r$ is large, which allows a tractable separation between (a) which subset of patches should receive harvest and (b) how to allocate harvest within that subset. With heterogeneous intraspecific competition rates, we found the opposite strategy depends on the objective: maximizing  biomass favors concentrating harvest on patches with larger competition rates, while maximizing yield favors concentrating harvest on patches with smaller competition rates. However, for homogeneous stream network, the structure enters through an effective net-flow metric that identifies where concentrating harvest is most effective; in particular, the decisive patch need not be the most downstream node when connectivity creates strong transport asymmetries and the same strategy apply to maximize both biomass and yield.

While this paper focuses on two optimization problems of maximizing total biomass and yield under harvesting constraints, other optimization questions naturally arise in the context of heterogeneous population control or preservation, as well as in infectious disease management (e.g., see~\cite{d2012efficacy}) and tumor/cancer control (e.g., see~\cite{hillen2009user}). We plan to investigate some of these questions in future work:
\begin{itemize}
\item
{\bf Minimum total control effort for extinction.} Find the least total control required across all patches to drive the metapopulation to extinction. That is, 
$$\min_{h_i\ge 0} \; \sum_{i=1}^n h_i$$
subject to $u_i(t) \to 0$ as $t\to \infty$, $i=1,2,\ldots, n$.

\item {\bf Optimal patch selection.} Minimize the number of patches under active control while still achieving extinction: 
$$\min \#\{i: h>0\}$$
subject to $u_i(t)\to 0$ as $t\to\infty$, $i=1,2,\ldots, n$.

\item {\bf Targeted control under budget constraints.} Distribute a fixed control budget $H>0$ across patches to minimize the long-term total population:
$$\min_{h_i \ge 0} \; \limsup_{t\to\infty} \sum_{i=1}^n u_i(t)$$
subject to $\sum_{i=1}^n h_i \le H$.
\end{itemize}

\section*{Acknowledgments}
The authors thank the American Institute of Mathematics
(AIM) for hosting and generously supporting an AIM SQuaRE program focusing on
population persistence in stream networks, at which this research was initiated and developed. This research was partially supported by the National Science Foundation (NSF) through the grants DMS 2527228 (ZS), DMS 2532769 (YW), and by the Simons Foundation (YW; ZS).

\bibliographystyle{plain}
\bibliography{ref}

\end{document}